\newtheorem{thm}{Theorem}[section]
\newtheorem{cor}[thm]{Corollary}
\newtheorem{prop}[thm]{Proposition}
\newtheorem{lem}[thm]{Lemma}
\theoremstyle{definition}
\newtheorem{defn}[thm]{Definition}
\theoremstyle{remark}
\newtheoremstyle{TheoremNum}
        {7pt}{7pt}              
        {\itshape}                      
        {}                              
        {\bfseries}                     
        {.}                             
        { }                             
        {\thmname{#1}\thmnote{ \bfseries #3}}
    \theoremstyle{TheoremNum}
    \newtheorem{thmn}{Theorem}
\begin{document}

\title{Some Explicit Elliptic Modular Surfaces}
\author{Laure Flapan}
\address{Department of Mathematics, Northeastern University, USA}
\email{l.flapan@northeastern.edu}
\subjclass[2010]{14C30, 14F45, 14J27}
\keywords{elliptic modular surface, elliptic surface, Mordell-Weil group}

\begin{abstract}
We consider algebraic surfaces, recently constructed by Schreieder, that are smooth models of the quotient of the self-product of a complex hyperelliptic curve by a $(\mathbb{Z}/3^c\mathbb{Z})$-action. We show that these surfaces are elliptic modular surfaces in the sense of Shioda, meaning in particular that they are universal families of explicit moduli of elliptic curves. 
\end{abstract}
\maketitle

\section{Introduction}
For any $c\ge1$ we consider a complex hyperelliptic curve $C_g$ of genus $g=\frac{3^c-1}{2}$, equipped with a $(\mathbb{Z}/3^c\mathbb{Z})$-action. By equipping the product $C_g\times C_g$ with a $(\mathbb{Z}/3^c\mathbb{Z})$-action,  Schreieder \cite{schreieder} constructs a smooth complex projective surface $X_c$ as a smooth model of the quotient of $C_g\times C_g$ by this $(\mathbb{Z}/3^c\mathbb{Z})$-action. Schreieder constructed these surfaces as part of a larger class smooth models $Z_{c,n}$ of the quotient of $C_g^n$ by a group isomorphic to $(\mathbb{Z}/3^c\mathbb{Z})^{n-1}$ for $n\ge 2$. These $n$-dimensional varieties $Z_{c,n}$ are notable for the unexpected form of their Hodge diamonds, namely that their Hodge numbers satisfy $h^{n,0}=g$ and $h^{p,q}=0$ for all other $p\ne q$. 

The noteworthy Hodge-theoretic properties of the varieties $Z_{c,n}$ yield noteworthy arithmetic and cycle-theoretic properties as well. For $c=1$, Cynk and Hulek prove in \cite{cynk} that the varieties $Z_{1,n}$ are modular, in the sense that one may identify a corresponding $L$-function attached to their cohomology. This result was then extended to all $c\ge 1$ in \cite{FL}. Recently, Laterveer and Vial show in \cite{LV} that the subring of the Chow ring of $Z_{c,n}$ generated by divisors, Chern classes, and intersections of two positive-dimensional cycles injects into cohomology via the cycle class map. Moreover they show that in the surface case studied here, the small diagonal of $Z_{c,2}$ admits a decomposition similar to that of $K3$ surfaces proved by Beauville-Voisin \cite{BV}. 

Here we investigate the geometry of the surfaces $Z_{c,2}=X_c$.  Shioda and Inose studied the geometry of these surfaces in the $c=1$ case in \cite{singularK3} and showed they are elliptically fibered $K3$ surfaces with finite Mordell-Weil group. In this paper, we show that for $c\ge 2$, the surface $X_c$ is similarly elliptically fibered. Moreover, we show that it is in fact an extremal elliptic surface, meaning that  $X_c$ has maximal Picard number and finite Mordell-Weil group. This adds to recent examples of Picard-maximal surfaces given by Beauville \cite{beauville} and Arapura-Solapurkar \cite{arapura}.

One of the principal constructions of extremal elliptic surfaces is due to Shioda \cite{modsur}, who attaches to any finite index subgroup $\Gamma$ of $SL(2,\mathbb{Z})$ not containing $-\mathrm{Id}$ a corresponding extremal elliptic surface $S_\Gamma$, called an \emph{elliptic modular surface}, above the modular curve $C_\Gamma \coloneqq \Gamma\backslash \mathcal{H}$ together with finitely many cusps. The elliptic modular surface $S_{\Gamma}$ is thus a universal family for the moduli space of elliptic curves parametrized by the curve $C_\Gamma$. 

Although Shioda's construction provides an appealing moduli interpretation of the elliptic surface $S_{\Gamma}$, in general, especially for complicated subgroups $\Gamma \subset SL(2,\mathbb{Z})$, finding an actual geometric construction of $S_{\Gamma}$ is quite difficult. The main result of this paper is that for a particular non-congruence subgroup $\Gamma_c$ of index $6\cdot 3^c$ in $SL(2,\mathbb{Z})$ we have:
\begin{thmn}[\ref{elmod}] For $c\ge2$, the surface $X_c$ is the elliptic modular surface attached to $\Gamma_c$. \end{thmn}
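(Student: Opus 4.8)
The plan is to identify the elliptic fibration on $X_c$ explicitly and then match its Kodaira–N\'eron data with the data defining $S_{\Gamma_c}$, invoking Kodaira's theorem that an elliptic surface with section over a fixed base is determined up to isomorphism by its functional and homological invariants. First I would produce the fibration: since for $c\ge 2$ the surface has Kodaira dimension $1$, its Iitaka fibration $\pi\colon X_c\to B$ is a relatively minimal elliptic fibration, and because $q(X_c)=0$ the base is rational, $B\cong\mathbb{P}^1$. I would realize $\pi$ concretely as the descent of a projection $C_g\times C_g\to C_g$ to the quotient, so that $B=C_g/\langle\sigma\rangle$ and the smooth fibers are the elliptic curves obtained from $\{x\}\times C_g$ after dividing by the relevant stabilizer of the $\mathbb{Z}/3^c\mathbb{Z}$-action; a section must be exhibited, coming from a distinguished curve in the quotient (the image of a fixed-point locus of the projection), which also accounts for the $\mathbb{Z}/4\mathbb{Z}$ torsion visible in the construction.

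Next I would determine all singular fibers: their location on $B$, their Kodaira types, and the $j$-invariant of the nearby smooth fibers. These occur over the branch points of $C_g\to C_g/\langle\sigma\rangle$ and along the images of the fixed loci of the action, and their types are read off from the cyclic quotient singularities of $(C_g\times C_g)/(\mathbb{Z}/3^c\mathbb{Z})$ and their minimal resolutions. Organizing these fibers by the orbits of the induced automorphism $\phi_c$ on $B$ should keep the bookkeeping uniform in $c$ and reproduce the total Euler number $e(X_c)=12\chi(\mathcal{O}_{X_c})=6\cdot 3^c+6$. From the Kodaira types I would read off the conjugacy classes of the local monodromy matrices in $SL(2,\mathbb{Z})$ (parabolic classes $\bigl(\begin{smallmatrix}1 & h\\ 0 & 1\end{smallmatrix}\bigr)$ at the $I_h$-fibers over cusps, and the finite-order classes attached to the additive fibers over elliptic points), and assemble the global monodromy representation $\rho\colon\pi_1(B^\circ)\to SL(2,\mathbb{Z})$, where $B^\circ=B\setminus S$ and $S$ is the set of points carrying singular fibers.

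I would then set $\Gamma_c:=\mathrm{im}(\rho)$ and compute its index. The genus-$0$ relation for the modular curve, fed with the cusp and elliptic-point data extracted above, should force $[SL(2,\mathbb{Z}):\Gamma_c]=6\cdot 3^c$; that $\Gamma_c$ is non-congruence I would verify by comparing its cusp widths against the constraints satisfied by congruence subgroups (via Wohlfahrt's level criterion). The decisive step, and the one I expect to be the main obstacle, is to prove that the functional invariant $J\colon B\to\mathbb{P}^1=SL(2,\mathbb{Z})\backslash\mathbb{H}$ is precisely the modular covering map attached to $\Gamma_c$, equivalently that the period map $B^\circ\to\mathbb{H}$ is \'etale and descends to an isomorphism $B^\circ\xrightarrow{\ \sim\ }\Gamma_c\backslash\mathbb{H}$. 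This is exactly the condition separating an elliptic modular surface from an arbitrary elliptic surface with the same singular-fiber configuration: I must rule out any excess ramification of $J$ beyond the orbifold ramification ($2$ over $j=1728$, $3$ over $j=0$) forced by the elliptic points. Concretely this reduces to showing that the Kodaira–Spencer map of $\pi$ is nowhere vanishing, which I would extract from the quotient construction by checking that the $j$-invariant of the fibers is non-constant with differential matching the modular one.

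Finally, having matched both invariants, I would invoke Kodaira's uniqueness theorem: the elliptic surface with section over $\mathbb{P}^1$ with functional invariant $J$ and homological invariant $\rho$ is unique, and by Shioda's construction this surface is exactly $S_{\Gamma_c}$. Hence $X_c\cong S_{\Gamma_c}$, and the remaining assertions of the abstract (extremality and the computation of the Mordell–Weil group) then follow from the general theory of elliptic modular surfaces.
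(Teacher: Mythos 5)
Your overall architecture (find the fibration, list singular fibers, read off monodromy, match invariants) parallels the paper's, but your concrete realization of the elliptic fibration is wrong, and everything downstream depends on it. You propose to realize the Iitaka fibration as the descent of the projection $C_g\times C_g\to C_g$, with smooth fibers ``obtained from $\{x\}\times C_g$ after dividing by the relevant stabilizer.'' For $c\ge 2$ this fails: when $x$ is not a fixed point of $\psi_g$, the stabilizer of $\{x\}\times C_g$ in $\langle\psi_g^{-1}\times\psi_g\rangle$ is trivial and the group freely permutes the $3^c$ fibers over the orbit of $x$, so the fiber of the descended map over $[x]\in C_g/\langle\psi_g\rangle\cong\mathbb{P}^1$ is a copy of $C_g$ itself, a curve of genus $g=(3^c-1)/2\ge 4$. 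The projection trick yields an elliptic fibration only when $g=1$, i.e.\ in the Shioda--Inose case $c=1$ that the paper explicitly sets aside. The actual Iitaka fibration is induced by the invariant function $t=x_1x_2$: its smooth fibers are $\mathbb{Z}/3^c\mathbb{Z}$-quotients of the curves $\{y_1^2=x_1^{2g+1}+1,\ y_2^2=x_2^{2g+1}+1,\ x_1x_2=t\}$, its singular fibers lie over $t=0$ (type $I_{4\cdot 3^c}$), $t=\infty$ (type $I_{3^c}^*$), and the $3^c$-th roots of unity ($3^c$ fibers of type $I_1$), and its four sections are not images of fixed loci of a projection: the images of the fixed curves $P_i\times C_g$, $C_g\times P_i$, $Q\times C_g$, $C_g\times Q$ are \emph{components of the fibers} over $0$ and $\infty$, while the sections are the $-(g+1)$-exceptional curves over the four Type II quotient singularities. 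Consequently your predicted fiber locations (branch points of $C_g\to C_g/\langle\sigma\rangle$), your ``additive fibers over elliptic points'' with finite-order monodromy (there are none; all points over $j=0$ and $j=1728$ carry smooth fibers), and your section would all come out wrong, so the monodromy representation you would assemble is not that of $\Gamma_c$.

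On the endgame: if the fibration were corrected, your plan is viable but does more work than necessary. Your ``decisive step'' (ruling out excess ramification of $j$, which you phrase as non-vanishing of the Kodaira--Spencer map) amounts to re-proving Nori's theorems; it does go through, since once \emph{all} singular fibers are known one gets $\deg j=6\cdot 3^c$ from the pole orders, and Riemann--Hurwitz together with the divisibility constraints on ramification over $0$ and $1728$ forces equality, hence no extra ramification. The paper instead proves extremality first (Shioda--Tate plus the $h^{1,1}$ count), then quotes Nori's criterion that an extremal elliptic surface with section, non-constant $j$-invariant, and no $II^*$ or $III^*$ fibers is elliptic modular, and only afterwards pins down $\Gamma$ via the ramification of $j$ and the fiber-type/monodromy dictionary. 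Note also that your closing claim runs backwards relative to this logic: extremality is used as an input in the paper, and the computation $MW\cong\mathbb{Z}/4\mathbb{Z}$ requires the explicit sections and the injection of $MW$ into the component groups of the two reducible fibers, not just ``general theory of elliptic modular surfaces.''
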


The organization of the paper is as follows. In Section \ref{construction}, we present the details of the construction of the surfaces $X_c$. Section \ref{koddim2} consists of the calculation that the surfaces $X_c$ have Kodaira dimension $1$. We then show in Section \ref{basepointfreesection} that the canonical bundle $K_{X_c}$ is basepoint free and use this in Section \ref{ellipticfibrationsection} to study the induced elliptic fibration $f\colon X_c\rightarrow \mathbb{P}^1$. The geometry of this fibration allows one to show in Section \ref{S6} that $X_c$ is  an extremal elliptic surface. Section \ref{S8} examines the $j$-invariant of $f:X_c\rightarrow \mathbb{P}^1$, which is necessary for the eventual proof in Section \ref{S9} that $X_c$ is an elliptic modular surface.

\section{Construction of $X_c$}\label{construction}

We begin by presenting the details of the construction of the surface $X_c$. For a fixed $c\ge2$ consider the complex hyperelliptic curve $C_g$ of genus $g=\frac{3^c-1}{2}$ given by the smooth projective model of the affine curve 
\[\{y^2=x^{2g+1}+1\}.\]
obtained by adding a point at $\infty$ covered by an affine piece
$\{v^2=u^{2g+2}+u\},$
such that  $x=u^{-1}$ and $y=v\cdot u^{-g-1}$. 

Fix $\zeta$ a primitive $3^c$-th root of unity. The curve $C_g$ then comes equipped with an automorphism $\psi_g$ of order $3^c=2g+1$ given by
\begin{align*}
(x,y)&\mapsto (\zeta x, y)\\
(u,v)&\mapsto (\zeta^{-1} u, \zeta^{g} v).
\end{align*}

Consider the product $C_g\times C_g$ and the automorphism of $C_g\times C_g$ given by $\psi_g^{-1}\times \psi_g$. Define the surface $X_c$ to be the minimal resolution of the quotient  $(C_g\times C_g)/\langle \psi_g^{-1}\times \psi_g \rangle$.

Observe that the fixed set of the action of $\psi_g$ on $C_g$ consists of $3$ points. In the coordinate patch given by  $\{y^2=x^{2g+1}+1\}$ we have two fixed points:
$$P_1\colon (x,y)=(0,1) \text{ and } P_2\colon (x,y)=(0,-1).$$
In the coordinate patch given by $\{v^2=u^{2g+2}+u\}$, we have a third fixed point:
$$Q\colon (u,v)=(0,0).$$

Hence, the fixed set of $\psi_g^{-1}\times \psi_g$ on $C_g\times C_g$ is $9$ points consisting of pairs of points taken from the set 
$\{P_1,P_2,Q\}$. The surface $X_c$ is then obtained from $(C_g\times C_g)/\langle \psi_g^{-1}\times \psi_g \rangle$ by resolving these $9$ singular points. 

Now using the Implicit Function Theorem one may verify that the coordinate $x$ is a local coordinate in the coordinate patch on $C_g$ given by $\{y^2=x^{2g+1}+1\}$ and the coordinate $v$ is a local coordinate in the coordinate patch on $C_g$ given by $\{v^2=u^{2g+2}+u\}$. Hence $\psi_g$ acts with weight $1$ around $P_1$ and $P_2$ and acts with weight $g$ around $Q$. 

Thus the $\mathbb{Z}/3^c\mathbb{Z}$-action of $\psi_g^{-1}\times \psi_g$ on $C_g\times C_g$ has weights $(-1,1)$ around fixed points of the form $(P_i,P_j)$, weights $(-1,g)$ around fixed points of the form $(P_i,Q)$, weights $(-g,1)$ around fixed points of the form $(Q,P_i)$, and weights $(-g,g)$ around the fixed point $(Q,Q)$.

We will call the $5$ fixed points on which $\psi_g^{-1}\times \psi_g$ has weights $(-1,1)$ or $(-g,g)$ the \emph{Type I fixed points } of $C_g\times C_g$ and the $4$ fixed points on which $\psi_g^{-1}\times \psi_g$ has weights $(-1,g)$ or $(-g,1)$ the \emph{Type II fixed points} of $C_g\times C_g$. We will also refer to these points as the Type I and Type II singular points, respectively, of the quotient $(C_g\times C_g)/\langle \psi_g^{-1}\times \psi_g \rangle$.

\subsection{Resolving the Singular Points of $(C_g\times C_g)/\langle \psi_g^{-1}\times \psi_g \rangle$}\label{quotsingsection}
To understand the resolutions of the singular points of $(C_g\times C_g)/\langle \psi_g^{-1}\times \psi_g \rangle$, we make use of established facts about surface cyclic quotient singularities and Hirzebruch-Jung resolutions.  A brief survey of these can be found in \cite[Section 2.4]{kollar} and a more detailed explanation can be found in \cite{reid}.

\subsubsection{Surface Cyclic Quotient Singularities and $X_c$}\label{cyclicquot}
Consider the action of the cyclic group $\mathbb{Z}/r\mathbb{Z}$ on $\mathbb{C}^2$ given by
$(z_1,z_2) \mapsto (\epsilon z_1, \epsilon^a z_2),$
for some $a$ coprime to $r$, where $\epsilon$ is a primitive $r$-th root of unity. We denote this action by
$\frac{1}{r}(1,a).$

For coprime integers $r$ and $a$, the \emph{Hirzebruch-Jung continued fraction} of $\frac{r}{a}$ is the expansion:
$$\frac{r}{a}=b_0-\frac{1}{b_1-\frac{1}{b_2-\frac{1}{b_3-\frac{1}{\cdots}}}}.$$
The \emph{Hirzebruch-Jung expansion} of $\frac{r}{a}$ is then the sequence
$[b_0,b_1,b_2,b_3,\ldots b_s],$ which denotes that  the minimal resolution of the singularity $\mathbb{C}^2/\frac{1}{r}(1,a)$ is a chain of $s+1$ exceptional curves $E_0, E_1,\ldots, E_s$ with nonzero intersection numbers $E_i.E_i=-b_i$ and $E_i.E_{i+1}=1$ \cite[Proposition 2.32]{kollar}.

In the quotient $(C_g\times C_g)/\langle \psi_g^{-1}\times \psi_g \rangle$, since $\psi_g^{-1}\times \psi_g$ acts on Type I fixed points with weights $(-1,1)$ on points $(P_i,P_j)$ and with weights $(-g,g)$ on the point $(Q,Q)$, the Type I fixed points have Hirzebruch-Jung expansion:
\[\underbrace{[2,\ldots ,2]}_{(3^c-1)-\mathrm{times}}.\]
In particular, the Type I singular points of $C_g\times C_g/\langle \psi_g^{-1}\times \psi_g\rangle$ are DuVal singularities of type $A_{3^c-1}$ and have minimal resolution consisting of a chain of $3^c-1$ rational curves, each with self-intersection $-2$.

Similarly, since the Type II fixed points are acted on by $\psi_g^{-1}\times \psi_g$ with weights $(-1,g)$ and $(-g,1)$, the Type II singular points  of $(C_g\times C_g)/\langle \psi_g^{-1}\times \psi_g\rangle$ have Hirzebruch-Jung expansion:
$$[2,g+1].$$
Hence the minimal resolution of each Type II singular point consists of a chain of two rational curves, one with self-intersection $-2$ and one with self-intersection $-(g+1)$.


\section{The Kodaira Dimension of $X_c$}\label{koddim2}
For a smooth algebraic variety $V$ and any $m> 0$, the $m$\emph{-th plurigenus} of $V$ is given by $P_m\coloneqq h^0(V,K_{V}^{\otimes m})$. The \emph{Kodaira dimension} $\kappa$ of $V$ is $-\infty$ if $P_m=0$ for all $m>0$ and otherwise it is the minimum $\kappa$ such that $P_m/m^\kappa$ is bounded. If $V$ has dimension $d$, then the Kodaira dimension of $V$ is either $-\infty$ or an integer $0\le \kappa \le d$.

Kodaira dimension is a birational invariant and thus we may compute the Kodaira dimension of the surface $X_c$ by computing the Kodaira dimension of any smooth model of $(C_g\times C_g)/\langle \psi_g^{-1}\times \psi_g \rangle$. In fact, we compute the Kodaira dimension of the smooth but non-minimal model $Y_c$ constructed by Schreieder in  \cite[Section 8]{schreieder} and detailed below.

\subsection{Construction of $Y_c$}\label{constructionY}
Consider the group 
$$H:=\langle \psi_g^{-1}\times \mathrm{id},\mathrm{id}\times \psi_g \rangle \subset \mathrm{Aut}(C_g\times C_g)$$
Now for each $i=1,\ldots,c$ define the element of order $3^i$ in $H$:
$$\eta_i:=(\psi_g^{-1}\times \psi_g)^{3^{c-i}}$$
The element $\eta_i$ generates a cyclic subgroup $G_i:=\langle\eta_i\rangle \subset H$, which gives a filtration
$$0=G_0\subset G_1\subset \cdots \subset G_c=\langle \psi_g^{-1}\times \psi_g\rangle$$
such that each quotient $G_i/G_{i-1}$ is cyclic of order $3$. Now,  let
\begin{equation*}
\begin{aligned}
Y_0&=C_g\times C_g,\\
Y_0'& = \text{Blow up of } Y_0 \text{ along } \mathrm{Fix}_{Y_0}(\eta_1),\\
Y_0''& =\text{Blow up of } Y_0' \text{ along } \mathrm{Fix}_{Y_0'}(\eta_1),
\end{aligned}
\end{equation*}
observing that since $H$ restricts to an action on $ \mathrm{Fix}_{Y_0}(\eta_1)$, the action of $H$ on $Y_0$ lifts to an action on $Y_0'$ and then similarly to an action on $Y_0''$. 

Now inductively define for $i\in \{1,\ldots,c\}$: 
\begin{equation*}
\begin{aligned}
Y_i&=Y_{i-1}''/\langle\eta_i\rangle,\\
Y_i'& = \text{Blow up of } Y_i \text{ along } \mathrm{Fix}_{Y_i}(\eta_{i+1}),\\
Y_i''& =\text{Blow up of } Y_i' \text{ along } \mathrm{Fix}_{Y_i'}(\eta_{i+1}),
\end{aligned}
\end{equation*}
where, by abuse of notation, we let $\langle\eta_i\rangle$ denote the subgroup of $\mathrm{Aut}(Y_{i-1}'')$ generated by the action of $\eta_i\in H$. Hence we have the following diagram:
\[
\begin{tikzcd}
&\arrow{dl} Y_0''\arrow{dr}&& \arrow{dl}Y_1''\arrow{dr}&&\arrow{dl}\cdots \arrow{dr} && \arrow{dl}Y_{c-1}''\arrow{dr}&\\
Y_0 && Y_1&& Y_2&& Y_{c-1}&&Y_c,
\end{tikzcd}
\]
where each arrow to the left corresponds to a sequence of two blow-up maps and each arrow to the right corresponds to a $3:1$ cover. Each quotient $Y_{i+1}=Y_i''/\langle \eta_{i+1}\rangle$ is then a smooth model of $Y_0/G_{i+1}$. Therefore $Y_c$ is the desired smooth model of $(C_g\times C_g)/G_c$.

\subsection{Images of the Type II Singular Points}\label{typeIIsingpoints section}
For our later computations, it will be useful to understand the images of the Type II singular points of $(C_g\times C_g)/G_c$ in $Y_c$.   Consider the Type II fixed points of the form $(Q,P_i)$. We know that the automorphism $\psi_g^{-1}\times \psi_g$ acts on these fixed points with weights $(-g,1)$. Hence $\psi_g^{-1}\times \psi_g$ acts on the preimage $E_0'$ of $(Q,P_i)$ in $Y_0'$ with weights
\[(-g,g+1) \text{ and } (g,1)\]
and acts on the preimage $E_0''$ of $(Q,P_i)$ in $Y_0''$ with weights
\[(-g,0),\  (0,g+1), \ (g,g+2),\text{  and }(-(g+2),1).\]
Hence $\psi_g^{-1}\times \psi_g$ acts on the image of $(Q,P_i)$ in $Y_1$ with weights
\[(-g,0),\  (0,g+1),\  \left(g,\frac{g+2}{3}\right),\text{  and }\left(-\frac{(g+2)}{3},1\right).\]

Let $U_1$ denote the coordinate patch on which $\psi_g^{-1}\times \psi_g$ acts with weights $(-g,0)$. Note that if $(z_{0,1},z_{0,2})$ were the original coordinates on $(Q,P_i)$ and blowing up twice resulted in introducing new coordinates $[z_{0,1}':z_{0,2}']$ followed by new coordinates $[z_{0,1}'':z_{0,2}'']$, then $U_1$ corresponds to the coordinate patch on $Y_1$ given by the image of the $z_{0,1}''\ne 0$ patch on $Y_0''$. On $U_1$, the fixed locus under the $(\psi_g^{-1}\times \psi_g)$--action has codimension $1$. Hence all subsequent blow-up maps $Y_i''\rightarrow Y_i'\rightarrow Y_i$ are isomorphisms on $U_1$. Let $U_c$ denote the image of $U_1$ in $Y_c$. Thus $U_c$ is obtained from $U_1$ by a sequence of $c-1$ quotients by $\mathbb{Z}/3\mathbb{Z}$ and $\psi_g^{-1}\times \psi_g$ acts on $U_c$ with weights $(-g,0)$. 

Let $V_1$ denote the  coordinate patches on which the automorphism $\psi_g^{-1}\times \psi_g$ acts with weights $\left(-\frac{(g+2)}{3},1\right)=\left(-\frac{3^{c-1}+1}{2},1\right)$. Observe that $V_1$ corresponds to the coordinate patch on $Y_1$ given by the image of the $z_{0,2''}\ne 0$ patch in $Y_0''$. Suppose each sequence of blowups $Y_i''\rightarrow Y_i'\rightarrow Y_i$ introduces new local coordinates $[z_{i,1}':z_{0,2}']$ and $[z_{i,1}'':z_{i,2}'']$. Inductively, define $V_i$ to be the image in $Y_i$ of the $z_{i-1,2}''\ne 0$ patch in $Y_{i-1}''$. Suppose $\psi_g^{-1}\times \psi_g$ acts with weights $\left(-\frac{3^{c-i}+1}{2},1\right)$ on $V_i$. Then after one blow-up $\psi_g^{-1}\times \psi_g$ acts with weights $\left(-\frac{3^{c-i}+3}{2},1\right)$ on the $z_{i,2}'\ne0$ patch. Since $-\frac{3^{c-i}+3}{2}$ is divisible by $3$, the blow-up $Y_i''\rightarrow Y_i'$ is an isomorphism on the $z_{i,2}'\ne0$ patch, and thus $\psi_g^{-1}\times \psi_g$ indeed acts with weights $\left(-\frac{3^{c-{i+1}}+1}{2},1\right)$ on $V_{i+1}$.  Hence by induction, we have that $\psi_g^{-1}\times \psi_g$ acts with weights $\left(-\frac{3^{c-i}+1}{2},1\right)$ on $V_i$ for each $1\le i \le c$ and to pass from $V_i$ to $V_{i+1}$ we perform one non-trivial blow-up followed by one $(\mathbb{Z}/3\mathbb{Z})$-quotient.


\subsection{Forms Under Quotients}\label{quotient}
Recalling the notation from the construction of $Y_c$ in Section \ref{constructionY}, consider the $3:1$ cover maps $f_i: Y_i''\rightarrow Y_{i+1}$. The Riemann-Hurwitz formula gives:
\begin{equation}\label{rhurwitz}K_{Y_i''}=f_i^*\left(K_{Y_{i+1}}+\sum_{D\in \mathrm{Div}(Y_{i+1})}\frac{a-1}{a}D\right)\end{equation}
where $a$ is the order of the group fixing the divisors in $Y_i''$ mapping to $D$ under $f_i$. 

By construction, the group $G_{i+1}/G_i$ acting on $Y_i''$ is isomorphic to $\mathbb{Z}/3\mathbb{Z}$. Thus for every irreducible divisor $D\in \mathrm{Div}(Y_{i+1})$, either $a=1$ or $a=3$. Moreover, the irreducible $D$ for which $a=3$ are exactly the images of the irreducible components of the exceptional divisors $E_i''$ obtained from the blow-up map $Y_i''\rightarrow Y_i'$, where it may happen that $E_i''\cong E_i'$. Let $E_{i,1}''\ldots, E_{i,k_i}''$ be the irreducible components of  $E_i''$. Observe that since $\eta_{i+1}$ fixes each of the $E_{i,j}''$, each component $E_{i,j}''$ descends to an irreducible divisor on $Y_{i+1}$. Equation (\ref{rhurwitz}) then yields:
\begin{equation}\label{rhurwitz2}K_{Y_i''}^{\otimes m}-\sum_{j=1}^{k_i}2mE_{i,j}'' =f_i^*\left(K_{Y_{i+1}}^{\otimes m}\right).\end{equation}

For an algebraic surface $V$ with a coordinate patch $(z_1,z_2)$ having the standard action of  $\mathbb{G}_m^2$ on $\mathbb{C}^2$, we say that a form $\omega$ is \emph{toric} on the patch $(z_1,z_2)$ if the divisor of zeros of $\omega$ on $(z_1,z_2)$ is invariant under the action of $\mathbb{G}_m^2$. 

\begin{defn} A toric form $\omega$ on a coordinate patch $(z_1,z_2)$ of an algebraic surface has \emph{vanishing sequence} $(\beta_1,\beta_2)$ on the point $(z_1,z_2)=(0,0)$ if $\omega$ vanishes to order $\beta_i$ along the hypersurface $z_i=0$.
\end{defn}

Now consider a $G_{i+1}$-invariant form $\sigma$ on $Y_i''$ which is a global section of $K_{Y_i''}^{\otimes m}$.  Suppose $Y_i''$ has local coordinates $(z_1,z_2)$ around some $E_{i,j}''$ fixed by the action of $G_{i+1}$ such that, without loss of generality $E_{i,j}''$, is given by $z_1=0$. Consider the point  $R=(0,0)$ on $E_{i,j}''$ and suppose the vanishing sequence of $\sigma$ on $R$ is
$(\alpha_1,\alpha_2).$ Then, using Equation (\ref{rhurwitz2}), the vanishing sequence of the pushforward of $\sigma$ to $Y_{i+1}$ has vanishing sequence on the image of $R$ in $Y_{i+1}$ given by
\begin{equation}\label{quotienteq}\left(\frac{1}{3}(\alpha_1-2m), \alpha_2\right).\end{equation}

\subsection{Forms Under Blow-Ups}\label{blowup}

Let $V$ be an algebraic surface and let $\sigma$ be a global section of $K_{V}^{\otimes m}$. For local coordinates $(z_1,z_2)$, consider the blow-up of $V$ at the point $(0,0)$. Suppose $\sigma$ has vanishing sequence $(\alpha_1,\alpha_2)$ on the point $(0,0)$. Then $\sigma$ is given locally by:
$$f(z_1, z_2)(dz_1 dz_2)^{\otimes m},$$
where $f$ has vanishing sequence $(\alpha_1,\alpha_2)$ on $(0,0)$. 

Then, blowing up $V$ at $(0,0)$ introduces new coordinates $[z_1': z_2']$, with $z_1z_2'=z_2z_1'$. Hence on the coordinate patch of the blown-up variety $V'$ given by $z_1'\ne0$, we have coordinates $(z_1, z_2')$, where $z_2=z_1z_2'$.  So locally around the exceptional divisor $E$, the form $\sigma$ pulls back to the form:
\[z_1^{m}f(z_1,z_1z_2)(dz_1dz_2')^{\otimes m}.\]

In the new coordinates $(z_1,z_2'),$ consider the point $R=(0,0)$. Then the vanishing sequence on $R$ of the pullback of $\sigma$ to $V'$ is given by
\begin{equation}\label{blowupeq}(\alpha_1+\alpha_2 + m,\alpha_2).\end{equation}

\subsection{Vanishing of Forms on Type II Singular Points}

Consider a form $\sigma$ with vanishing sequence $(\alpha_1,\alpha_2)$ on a Type II fixed point in $C_g\times C_g$ of the form $(Q,P_i)$. Then, using the notation of Section \ref{typeIIsingpoints section}, Equations \eqref{blowupeq} and \eqref{quotienteq} yield that $\sigma$ has vanishing sequence $\left(\frac{1}{3}(\alpha_1+2\alpha_2),\alpha_2\right)$ at the origin of $U_1$ and vanishing sequence $\left(\alpha_1, \frac{1}{3}(2\alpha_1+\alpha_2)\right)$ at the origin of $V_1$. 

Since to obtain $U_c$ from $U_1$ one performs a sequence of $c-1$ quotients by $\mathbb{Z}/3\mathbb{Z}$, the vanishing sequence of $\sigma$ at the origin of $U_c$ is given by
\begin{equation}\label{vanishing1}\left(\frac{1}{3}\big(\cdots\big(\frac{1}{3}\big(\frac{1}{3}\big(\frac{1}{3}(\alpha_1+2\alpha_2)-2m)-2m\big)\cdots\big)-2m\big),\alpha_2\right)=\left(\frac{1}{3^c}(\alpha_1+2\alpha_2-m(3^c-3)),\alpha_2\right).\end{equation}

Since to obtain $V_c$ from $V_1$ one performs $c-1$ iterations of a single non-trivial blow-up followed by a $(\mathbb{Z}/3\mathbb{Z})$-quotient, the vanishing sequence of $\sigma$ on the origin of $V_c$ is given by
\begin{equation}\label{vanishing2}\left
(\alpha_1, 
\frac{1}{3}\big( \cdots \big( \frac{1}{3}\big(\frac{1}{3}(2\alpha_1+\alpha_2)+\alpha_1-m\big)\cdots+\alpha_1-m\big)\right)=\left(\alpha_1,\frac{1}{3^c}\big(\frac{3^c+1}{2}\alpha_1+\alpha_2-m\frac{3^c-3}{2}\big)\right).\end{equation}

Symmetrically, if $\sigma$ has vanishing sequence $(\beta_1,\beta_2)$ on a Type II fixed point in $C_g\times C_g$ of the form $(P_i,Q)$, then the chain of rational curves in $Y_c$ resolving this singular point of $(C_g\times C_g)/\langle \psi_g^{-1}\times \psi_g \rangle$ has one endpoint (the one covered by coordinate patch $z_{c,1}\ne0$) on which $\sigma$ has vanishing sequence 
\begin{equation}\label{vanishing3}\left(\frac{1}{3^c}\big(\beta_1+ \frac{3^c+1}{2}\beta_2+-m\frac{3^c-3}{2}\big), \beta_2\right)\end{equation}
and one endpoint (the one covered by coordinate patch $z_{c,2}\ne0$) on which $\sigma$ has vanishing sequence
\begin{equation}\label{vanishing4}\left(\beta_1, \frac{1}{3^c}(2\beta_1+\beta_2-m(3^c-3))\right).\end{equation}

\subsection{Kodaira Dimension Computation for $X_c$}
We now compute the plurigenera $P_m=h^0(Y_c,K_{Y_c}^{\otimes m})$ of the variety $Y_c$  using the following theorem of K\"{o}ck and Tait.
\begin{thm}\label{global}\cite[Theorem 5.1]{kock} Let $C$ be a hyperelliptic curve of genus $g\ge 2$ of the form $y^2=f(x)$ for some $f$ and let $\omega\in K_C^{\otimes m}$ be given by $\omega=\frac{dx^{\otimes m}}{y^m}$. Then an explicit basis for $H^0(C,K_C^{\otimes m})$ is given by the following:
$$\begin{cases}
\omega, x\omega, \ldots, x^{g-1}\omega & \mbox{if } m=1\\
\omega, x\omega, x^2\omega & \mbox{if }m=2 \mbox{ and } g=2\\
\omega, x\omega, \ldots, x^{m(g-1)}\omega; y\omega, xy\omega, \ldots, x^{(m-1)(g-1)-2}y\omega & \mbox{otherwise}
\end{cases}.$$
\end{thm}

\begin{prop}
\label{dim2} For $c\ge2$, the surface $X_c$ has Kodaira dimension $1$.\end{prop}

\begin{proof}
Fix some $m>0$. By Theorem \ref{global}, we are interested in global sections of $K_{C_g}^{\otimes m}$ of the form $x^a\omega$, where $0\le a\le m(g-1)$ or of the form $x^ay\omega$, where $0\le a \le (m-1)(g-1)-2$. 
Begin by considering the affine patch of $C_g$ given by $\{y^2=x^{2g+1}+1\}$.  We know the variable $x$ is a local coordinate for $C_g$ at the points $P_1$ and $P_2$. Since we have
$$\omega=\frac{dx^{\otimes m}}{y^{m}},$$
the form $\omega$ has order of vanishing equal to $0$ at $P_1$ and $P_2$. Hence both forms $x^{a}\omega$ and $x^{a}y\omega$ have order of vanishing $a$ at fixed points $P_1$ and $P_2$. 

In $(u,v)$-coordinates, the form $\omega$ is given by
$$\frac{(-1)^{m}u^{m(g-1)}du^{\otimes m}}{v^{m}}$$
Recall that $v$ is a local coordinate near the point $Q$.  The equation $v^2=u^{2g+2}+u$ yields $2v\cdot dv=((2g+2)u^{2g+1}+1)\cdot du$. Therefore $du$ and $v$ vanish to the same order. Moreover, $u$ has order of vanishing $2$ with respect to $v$, hence the order of vanishing of $\omega$ at the point $Q$ is $2m(g-1)=m(3^{c}-3)$. 

Hence a form $x^{a}\omega=u^{-a}\omega$ has order of vanishing at $Q$ given by 
$$m(3^{c}-3)-2a$$
and a form $x^{a}y\omega=u^{-(a+g+1)}v\omega$ has order of vanishing at $Q$ given by
$$2m(g-1)-2(a+g+1)+1=m(3^{c}-3)-2a-3^c.$$

Now, without loss of generality a  global section $\sigma$ of $K_{C_g\times C_g}^{\otimes m}$ is of three possible forms:
\begin{enumerate}
\item $x_1^{a_1}\omega_1\times x_2^{a_2}\omega_2$
\item $x_1^{a_1}\omega_1\times x_2^{a_2}y_2\omega_2$
\item $x_1^{a_1}y_1\omega_1\times x_2^{a_2}y_2\omega_2.$
\end{enumerate}

Suppose that the form $\sigma$ corresponds to some global section of $K_{Y_c}^{\otimes m}$.  Write $(\alpha_1,\alpha_2)$ for the vanishing sequence of $\sigma$ on some Type II fixed point of the form $(Q,P_i)$ on $C_g\times C_g$ and $(\beta_1,\beta_2)$ for the  vanishing sequence of $\sigma$ on some Type II fixed point of the form $(P_i,Q)$ on $C_g\times C_g$. Then by Equation \eqref{vanishing1} we must have
\begin{equation}\label{alphaineq}\alpha_1+2\alpha_2-m(3^c-3)\ge 0 \ \text{ and } \ \alpha_2\ge 0.\end{equation}
Similarly, by Equation \eqref{vanishing4} we must have
\begin{equation}\label{betaineq}\beta_1\ge 0 \ \text{ and } \ 2\beta_1+\beta_2-m(3^c-3)\ge 0.\end{equation}

First consider when $\sigma$ is of the form $x_1^{a_1}\omega_1\times x_2^{a_2}\omega_2$. Then we know
\[(\alpha_1,\alpha_2)=(m(3^{c}-3)-2a_1,a_2) \ \text{ and }\ 
(\beta_1,\beta_2)=(a_1, 3(3^{c}-3)-2a_2).\]
Hence after simplification, Equations \eqref{alphaineq} and \eqref{betaineq} yield that we must have $a_1=a_2$. 

In the case of a global section $\sigma$ of the form $x_1^{a_1}\omega_1\times x_2^{a_2}y_2\omega_2$, we have:
\[
(\alpha_1,\alpha_2)=(3m(3^{c-1}-1)-2a_1,a_2)
\ \text{ and }\ 
(\beta_1,\beta_2)= (a_1, 3m(3^{c-1}-1)-2a_2-3^c).\]
Hence after simplification, Equations \eqref{alphaineq} and \eqref{betaineq} yield that we must have
$2a_2\ge 2a_2+3^c,$
which is impossible. So no such $\sigma$ can exist.

Finally, in the case of a global section $\sigma$ of the form $x_1^{a_1}y_1\omega_1\times x_2^{a_2}y_2\omega_2$, we have:
\[
(\alpha_1,\alpha_2)=(3m(3^{c-1}-1)-2a_1-3^c,a_2)
\ \text{ and } \ 
(\beta_1,\beta_2)= (a_1, 3m(3^{c-1}-1)-2a_2-3^c).
\]
Hence after simplification, Equations \eqref{alphaineq} and \eqref{betaineq} yield that we must have
$2a_2\ge 2a_2+2\cdot3^c,$
which is impossible. So no such $\sigma$ can exist.

Therefore we have shown that the only global sections of $K_{C_g\times C_g}^{\otimes m}$ that can correspond to global sections of $K_{Y_c}^{\otimes m}$ are those of the form $x_1^a\omega_1\times x_2^a\omega_2$
for $0\le a \le m(g-1)$.  Moreover, note that sections of this form are always $\psi_g^{-1}\times \psi_g$-invariant, therefore any global section of $K_{Y_c}^{\otimes m}$ and hence any global section of $K_{X_c}^{\otimes m}$ corresponds to a form $x_1^a\omega_1\times x_2^a\omega_2$
for $0\le a \le m(g-1)$. This is a linear condition on $m$, therefore the Kodaira dimension of $X_c$ is at most equal to $1$.

However, by construction, we have $h^0(X_c,K_{X_c})=h^{2,0}=g$.  In particular, this means $h^0(X_c,K_{X_c})$ is greater than $1$, so the Kodaira dimension of $X_c$ is at least equal to $1$. Hence, the Kodaira dimension of $X_c$ is exactly equal to $1$.

\end{proof}


\section{The Canonical Bundle $K_{X_c}$}\label{basepointfreesection}
Observe that the proof of Theorem \ref{dim2} yields that $H^0(X_c,K_{X_c})$ has basis corresponding to the set of global sections of $K_{C_g\times C_g}$ given by $B=\{x_1^a\omega_1\times x_2^a\omega_2\mid 0\le a\le g-1\}$, where $x_i$ denotes the $x$-coordinate of the $i$-th factor in the product $C_g\times C_g$ and $\omega_i\coloneqq\frac{dx_i}{y_i}$. 

\begin{prop}\label{basepointfree}For $c\ge 2$, the canonical bundle $K_{X_c}$ is basepoint free.
\end{prop}

\begin{proof}
Consider the rational map $f\colon X_c\dashrightarrow \mathbb{P}(H^0(X_c,K_{X_c}))$ induced by the canonical bundle $K_{X_c}$, where $\mathbb{P}(H^0(X_c,K_{X_c}))$ denotes the space of hyperplanes in $H^0(X_c,K_{X_c})$. Observe that $f$ fits into a diagram
\begin{equation}
\begin{tikzcd}
X_c\rar[-,dashrightarrow]{f}&\mathbb{P}(H^0(X_c,K_{X_c}))\\
C_g\times C_g\uar[-,dashrightarrow]\rar[-,dashrightarrow] &\mathbb{P}(H^0(C_g^2,K_{C_g^2}))\uar[-,dashrightarrow],
\end{tikzcd}
\end{equation}
where the horizontal maps are given by evaluation on the basis $B$ and the rational vertical map on the left is the sequence of blow-ups, blow-downs, and quotients needed to obtain $X_c$ from $C_g\times C_g$. 

Let $s_a\coloneqq x_1^a\omega_1\times x_2^a\omega_2$ for $0\le a\le g-1$. Observe that the points of $C_g\times C_g$ on which all the $s_a$ vanish are exactly the Type II fixed points. Thus to prove that $K_{X_c}$ is basepoint free, we just need to ensure that not all of the $s_a$ vanish on the image in $X_c$ of a Type II fixed point.

Without loss of generality consider a Type II fixed point $A$ of the form $(Q,P_i)$ for some $i\in \{1,2\}$, noting that a symmetric argument will work for Type II fixed points of the form $(P_i,Q)$.  As discussed in Section \ref{typeIIsingpoints section}, the image of $A$ in $Y_c$ is a chain of rational curves where one endpoint of this chain is covered by coordinate patch $U_c$ and the other endpoint of the chain is covered by coordinate patch $V_c$. Moreover, as we computed in the proof of Proposition \ref{dim2},  the form $s_a$ has vanishing sequence $(m(3^{c}-3)-2a,a)$ on $A$. Hence, by Equations \eqref{vanishing1} and \eqref{vanishing2}, the form $s_a$ has vanishing sequences 
\[(0,a)\  \text{ and } \ (3^c-3-2a, \frac{1}{2}(3^c-3-2a))\]
at the origins of $U_c$ and $V_c$ respectively.

By Section \ref{blowup}, if $s_a$ has vanishing sequence $(\alpha_1,\alpha_2)$ at some point $Z$, then the exceptional curve $E$ resulting from blowing up $Z$ will be covered by two coordinate patches such that $s_a$ has vanishing sequence $(\alpha_1+\alpha_2+1, \alpha_2)$ at the origin of one and vanishing sequence $(\alpha_1, \alpha_1+\alpha_2+1)$ at the origin of the other. 
Thus, consider the chain of rational curves in $Y_c$ resolving the point $A$. When passing to $X_c$, we know all but the two outer curves in the chain get contracted. Thus the resulting two rational curves in $X_c$ are covered by three coordinate patches: $U_c$ on one end, $V_c$ on the other end, and a third patch we denote by $W_c$ in the middle. Suppose $s_a$ has vanishing sequence $(\gamma_1,\gamma_2)$ on the origin of $W_c$. Since $s_a$ has vanishing sequence $(0,a)$ on the origin of $U_c$, we must have $\gamma_2=0$. Similarly, since $s_a$ has vanishing sequence $(3^c-3-2a, \frac{1}{2}(3^c-3-2a))$ at the origin of $V_c$, we must have $\gamma_1=\frac{1}{2}(3^c-3-2a)$. It follows that $s_a$ has vanishing sequence at the origins of the three coordinate patches $U_c$, $W_c$, and $V_c$ given respectively by
\[(0,a), \ \left(\frac{1}{2}(3^c-3-2a), 0\right), \  \text{ and } \ \left(3^c-3-2a, \frac{1}{2}(3^c-3-2a)\right).\]

Hence on $U_c$, we know $s_0\ne 0$ and on $W_c$ and $V_c$ we know $s_{g-1}\ne0$. Thus, indeed, not all the $s_a$ vanish on the image in $X_c$ of $A$ and so $K_{X_c}$ is indeed basepoint free. 
\end{proof}


\section{The Elliptic Fibration $f\colon X_c\rightarrow \mathbb{P}^1$}\label{ellipticfibrationsection}
We have shown in Proposition \ref{basepointfree} that the canonical bundle $K_{X_c}$ is basepoint free and in Proposition \ref{dim2} that the Kodaira dimension of $X_c$ is $1$. Hence the Iitaka fibration
\[f\colon X_c\rightarrow \mathbb{P}(H^0(X_c,K_{X_c}))\]
given by sending a point $x$ to its evaluation on the basis $B$ of global sections of $K_{X_c}$ has elliptic curve fibers and has image a curve \cite[Theorem 2.1.33]{positivity}). Moreover, since $X_c$ has Hodge numbers $h^{1,0}=h^{0,1}=0$, this image curve must have genus $0$. Therefore, the surface $X_c$ is an elliptic surface equipped with elliptic fibration $f:X_c\rightarrow \mathbb{P}^1$. One may find many of the basic properties of elliptic surfaces in the surveys \cite{miranda} and \cite{elsur}.

We now study in detail the geometry of the elliptic fibration $f\colon X_c\rightarrow \mathbb{P}^1$.  In particular, we extensively use Kodaira's classification, in \cite{kodaira} and \cite{kodaira2}, of the possible singular fibers of an elliptic surface. For a survey of the possible fiber types, see \cite[I.4]{miranda} and \cite[Section 4]{elsur}.

As we will see, the two kinds of singular fibers that appear in the fibration $f$ are singular fibers of type $I_b$ for $b>0$ and singular fibers of type $I_b^*$ for $b\ge 0$. Singular fibers of type $I_b$ consist of $b$ smooth rational curves meeting in a cycle, namely meeting with dual graph the affine Dynkin diagram $\tilde{A}_b$. Singular fibers of type $I_b^*$ consist of $b+5$ smooth rational curves meeting with dual graph the affine Dynkin diagram $\tilde{D}_{b+4}$.

Recall from Section \ref{quotsingsection} that the quotient $(C_g\times C_g)/\langle\psi_g^{-1}\times \psi_g\rangle$ has two types of singular points: Type I and Type II. The five Type I singular points are all DuVal singularities of type $A_{3^c-1}$ and thus each has resolution in $X_c$ consisting of a chain of $3^c-1$ rational curves of self-intersection $-2$. The four Type II singular points each have a resolution in $X_c$ consisting of a chain of $2$ rational curves: one with self-intersection $-2$ and one with self-intersection $-(g+1)$.

Let $\delta_1=Q\times P_1$, $\delta_2=Q\times P_2$, $\delta_3=P_1\times Q$, and $\delta_4=P_2\times Q$ denote these four Type II singular points and let $\mathcal{S}_1,$ $\mathcal{S}_2$, $\mathcal{S}_3$, and $\mathcal{S}_4$ denote each of their respective $-(g+1)$-curves in $X_c$. 

\begin{thm}\label{sing}For $c\ge 2$, the elliptic surface $f\colon X_c\rightarrow \mathbb{P}^1$ has $3^c+2$ singular fibers: one of type $I_{4\cdot3^c}$ located at $0$, one of type $I_{3^c}^*$ located at $\infty$, and the remaining $3^c$ of type $I_1$ and located at the points $\zeta^i$, for $\zeta$ a primitive $3^c$-th root of unity.  Additionally, each of the rational curves $\mathcal{S}_1,$ $\mathcal{S}_2$, $\mathcal{S}_3$, and $\mathcal{S}_4$ coming from the resolution of a Type II singular point corresponds to a section of $f$. 
\end{thm}

\begin{figure}[h]
\centering{
\includegraphics
[width=.9\textwidth]
{drawing3.pdf}
\caption{The elliptic surface $f\colon X_c\rightarrow \mathbb{P}^1$}}
\end{figure}

\begin{proof}
As in the proof of Proposition \ref{basepointfree}, we have the diagram
\begin{equation}\label{maindiagram}
\begin{tikzcd}
X_c\arrow{r}{f}&\mathbb{P}(H^0(X_c,K_{X_c}))\\
C_g\times C_g\uar[-,dashrightarrow]\rar[-,dashrightarrow] &\mathbb{P}(H^0(C_g^2,K_{C_g^2}))\uar[-,dashrightarrow], 
\end{tikzcd}
\end{equation}
where the horizontal maps are given by evaluation on the basis of global sections $B=\{x_1^a\omega_1\times x_2^a\omega_2\mid 0\le a\le g-1\}$ and the rational vertical map on the left is the sequence of blow-ups, blow-downs, and quotients needed to obtain $X_c$ from $C_g\times C_g$. 

To understand the fibration $f:X_c\rightarrow \mathbb{P}^1\subset \mathbb{P}(H^0(X_c,K_{X_c}))$, we thus first would like to understand the composition
\[\alpha\colon C_g\times C_g\dashrightarrow \mathbb{P}(H^0(X_c,K_{X_c})).\]

Let $s_a\coloneqq (x_1x_2)^a \omega_1\omega_2$ for $0\le a\le g-1$ denote the global sections of $K_{C_g\times C_g}$ that correspond to a basis of global sections of $K_{X_c}$. We know that  the points of $C_g\times C_g$ on which all the $s_a$ vanish are exactly the Type II fixed points. Since the map $\alpha\colon C_g\times C_g\dashrightarrow \mathbb{P}(H^0(X_c,K_{X_c}))$ may be viewed as the rational map given by
\[(z_1,z_2)\mapsto [s_0(z_1,z_2): \cdots : s_{g-1}(z_1,z_2)],\]
we have
\begin{equation}\label{preimage}
\begin{aligned}
\alpha^{-1}([1:0:\cdots:0])=&(P_1\times (C_g-Q))\cup (P_2\times (C_g-Q))\cup ((C_g-Q)\times P_1)\cup ((C_g-Q)\times P_2)\\
\alpha^{-1}([0:\cdots:0:1])=&(Q\times Q)\cup (Q\times (C_g-P_1-P_2))\cup ((C_g-P_1-P_2)\times Q).
\end{aligned}
\end{equation}

In particular, the image of the fixed points in $C_g\times C_g$ of the form $(P_i,P_j)$ is the point $[1:0:\cdots:0] $ in  $\mathbb{P}(H^0(X_c,K_{X_c}))$. Each such point has image in $X_c$ consisting of a chain of $3^c-1$ rational curves and so by the diagram (\ref{maindiagram}), the fibration $f\colon X_c\rightarrow \mathbb{P}^1$ must send all of these $3^c-1$ rational curves to the point $[1:0:\cdots:0]$.

Moreover, using \eqref{maindiagram} in conjunction with \eqref{preimage}, since $f$ is a morphism we must have that the strict transforms in $X_c$ of the curves $P_1\times C_g,$ $P_2\times C_g,$  $C_g\times P_1,$  and $C_g\times P_2$
also get sent to $[1:0:\cdots:0]$. Note that the strict transform of $C_g\times P_j$ will intersect the chain of rational curves resolving the singularity $P_i\times P_j$ at one end of the chain and the strict transform of $P_i\times C_g$ will intersect the chain at the other end of the chain. 

Similarly, by \eqref{preimage} the image of the fixed point $(Q,Q)$ in $C_g\times C_g$ will be sent by $\alpha$ to the point $[0:\cdots:0:1] $ in  $\mathbb{P}(H^0(X_c,K_{X_c}))$. Since such a fixed point has image in $X_c$ consisting of a chain of $3^c-1$ rational curves, diagram (\ref{maindiagram}) yields that $f$ must send all of these rational curves to the point $[0:\cdots:0:1]$.

Moreover, by \eqref{maindiagram} in conjunction with \eqref{preimage}, since $f$ is a morphism we must have that the strict transforms in $X_c$ of the curves 
$Q\times C_g $ and $C_g\times Q$
 get sent to $[0:\cdots:0:1]$ as well. Again, the strict transform of $Q\times C_g$ will intersect the chain of rational curves resolving the singularity  $(Q,Q)$ at one end and the strict transform of $C_g\times Q$ will intersect the chain at the other end. 
 
Therefore we have established that $f$ sends the strict transforms in $X_c$ of the curves
 \begin{equation}\label{Pcurves}P_1\times C_g,\  P_2\times C_g, \ C_g\times P_1, \text{ and } C_g\times P_2\end{equation}
 to the point $[1:0:\cdots:0]$ in $\mathbb{P}(H^0(X_c,K_{X_c}))$ and the strict transforms of the curves 
\begin{equation}\label{Qcurves}Q\times C_g \text{ and } C_g\times Q\end{equation}
 to the point $[0:\cdots:0:1]$.
 
Each of the four Type II fixed points $\delta_1,$ $\delta_2$, $\delta_3,$ and $\delta_4$ in $C_g\times C_g$ has one of the curves in (\ref{Pcurves}) and one of the curves in (\ref{Qcurves}) passing through it. Recall that each point $\delta_j$ has image in $X_c$ consisting of a chain of $2$ rational curves: a $(-2)$-curve and the $-(g+1)$-curve $\mathcal{S}_j$. Observe that for a Type II point of the form $Q\times P_i$, the strict transform in $X_c$ of the curve $Q\times C_g$ will intersect this chain of rational curves at the end of the $(-2)$-curve and the strict transform of the curve $C_g\times P_i$ will intersect the chain of rational curves at the end of the $-(g+1)$-curve.

By the adjunction formula, the irreducible curves on $X_c$ contained in a fiber of $f:X_c\rightarrow \mathbb{P}^1\subset \mathbb{P}(H^0(X_c,K_{X_c}))$ are exactly those curves $F\subset X_c$ with $K_{X_c}.F=0$. So, of the two curves in the resolution of a Type II singular point $\delta_j$, the curve with self-intersection $-2$ gets mapped to a point by $f$ and the curve $\mathcal{S}_i$ with self-intersection $-(g+1)$ gets mapped to all of $\mathbb{P}^1$ by $f$.

Since the $(-2)$-curve in the resolution of each Type II singular point intersects either the curve $Q\times C_g$ or the curve $ C_g\times Q$, both of which get sent to the point $[0:\cdots:0:1]$ by $\alpha$, we have that $f$ sends these $(-2)$-curves to $[0:\cdots:0:1]$ as well. 

Thus we have accounted for all of the curves in the resolutions of the singular points of $(C_g\times C_g)/\langle\psi_g^{-1}\times \psi_g\rangle$ and described their images under $f$. To summarize, the fiber in $X_c$ of the point $[1:0:\cdots:0]$ in $ \mathbb{P}(H^0(X_c,K_{X_c}^{\otimes m}))$ is a cycle consisting of the four sets of $3^c-1$ rational curves coming from the resolutions of the points $P_i\times P_j$ together with the four curves in (\ref{Pcurves}). Hence, the fiber under $f$ of $[1:0:\cdots:0]$ consists of 
$4(3^c-1)+4=4\cdot 3^c$
rational curves.  This is a fiber of type $I_{4\cdot 3^c}$ in Kodaira's classification of the singular fibers of an elliptic surface. Similarly, the fiber in $X_c$ of the point $[0:\cdots:0:1]$ consists of the $3^c-1$ rational curves resolving the singularity $Q\times Q$ together with the curves in (\ref{Qcurves}) and the four $(-2)$-curves, each from a resolution of a Type II point. Hence, the fiber under $f$ of $[0:\cdots:0:1]$  consists of a chain of $3^c+1$ rational curves, where each curve on the ends of the chain has two additional curves coming off it.  This is a fiber of type $I_{3^c}^*$.

We now want to identify the singular fibers of $f$ away from the points $0$ and $\infty$ in $\mathbb{P}^1$. Therefore we would like to understand the map 
$f\colon X_c\rightarrow   \mathbb{P}^1\subset \mathbb{P}(H^0(X_c,K_{X_c}))$
 away from any points on which a global section of $K_{X_c}$ vanishes. Say that $\mathbb{P}(H^0(X_c,K_{X_c}))$ has coordinates $[w_0:\cdots:w_{g-1}]$. Then on the affine patch of $\mathbb{P}(H^0(X_c,K_{X_c}))$ given by $w_0\ne0$, the image of $\alpha$ is of the form 
$(t,t^2,\ldots, t^{g-1}),$ where $t=x_1(z_1) x_2(z_2)$. The fibers of $\alpha$ on this subset are then the curves $F_t\subset C_g\times C_g$ defined by:
\[(y_1^2=x_1^{2g+1}+1,\  y_2^2=x_2^{2g+1}+1,\  x_1x_2=t),\]
where we are assuming $t\ne0$. Such curves $F_t$ have Jacobian
\begin{equation*}
\left(\begin{array}{cccc}
(2g+1)x_1^{2g} &2y_1 & 0&0\\
0&0&(2g+1)x_2^{2g} &2y_2\\
x_2&0&x_1&0
\end{array}\right)
\end{equation*}

Hence the curve $F_t$ is singular when both $y_1$ and $y_2$ are equal to $0$. When this is the case, then we have $x_1=\zeta^i$, $x_2=\zeta^j$, and $t=\zeta^{i+j}$, for some $i$ and $j$. In other words, if $t\in \mathbb{C}^*$ is any $3^c$-th root of unity, then the fiber $F_t$ is singular and has singularities at the points
$(x_1,y_1,x_2,y_2)=(\zeta^i,0,\zeta^j,0)$
such that $i+j\equiv 0 \mod3^c$. It follows that the curve $F_t$ has $3^c$ singular points. Since the action of $\psi_g^{-1}\times \psi_g$ permutes these $3^c$ singular points, the image $\bar{F}_t$ of $F_t$ in $X_c$ is a curve with a single singularity. So $\bar{F}_t$ is then an irreducible singular fiber of $f$. By Kodaira's classification of the possible singular fibers of an elliptic surface, the fiber $\bar{F}_t$ must in fact be a rational nodal curve, namely a fiber of type $I_1$.

Therefore, in summary, we have identified a singular fiber of  type $I_{4\cdot3^c}$ at the point $0$ in $\mathbb{P}^1$, we have identified a singular fiber of type $I_{3^c}^*$ at the point $\infty$, and we have identified $3^c$ singular fibers of type $I_1$, located at the points $\zeta^i$. We may confirm that these are indeed all of the singular fibers of $f$ using the following description of the topological Euler number of an elliptic surface. From \cite[Proposition 5.16]{cossec},
for a complex elliptic surface $\varphi:S\rightarrow C$ with fiber $F_v$ at $v\in C$ having $m_v$ components, we have
\begin{equation}\label{euler}\chi_{\mathrm{top}}(S)=\sum_{v\in C}e(F_v),\end{equation}
where
\begin{equation*}
e(F_v)=
\begin{cases}
0 & \mbox{if } F_v \text{ is smooth}\\
m_v & \mbox{if } F_v \text{ is of type }I_n\\
m_v+1&\mbox{otherwise}.
\end{cases}
\end{equation*}

For any elliptic surface, Noether's formula implies that the topological Euler number is $12$ times the geometric Euler number. Since the surface $X_c$ has irregularity $q=0$ and geometric genus $p_g=g$, we thus have 
$\chi_{\mathrm{top}}(X_c)=12(g+1)=6\cdot3^c+6.$
Considering the fibration $f:X_c\rightarrow \mathbb{P}^1$, the $4\cdot3^c$ components from the singular fiber of type $I_{4\cdot3^c}$, the $3^c+5$ components from the singular fiber of type $I_{3^c}^*$, and the $3^c$ components from the $3^c$ singular fibers of type $I_1$ account for exactly $6\cdot3^c+6$ on the right hand side of Equation (\ref{euler}). So we have indeed found all the singular fibers  of $f$.

It remains to verify that the curves $\mathcal{S}_1$, $\mathcal{S}_2$, $\mathcal{S}_3$, and $\mathcal{S}_4$ in the resolutions of the Type II singular points $\delta_1,$ $\delta_2,$ $\delta_3,$ and $\delta_4$ indeed correspond to sections of $f$. We know that $f$ maps each curve $\mathcal{S}_i$ surjectively onto $\mathbb{P}^1$, therefore we just need to check that for each $t\in \mathbb{P}^1$ there is a unique point $s\in \mathcal{S}_i$ such that $f(s)=t$. 

Without loss of generality suppose that $\delta_j$ is of the form $Q\times P_i$,  as a symmetric argument will work for points of the form $P_i\times Q$. Let $t\in \mathbb{P}^1$ and consider the fiber $\overline{F}_t=f^{-1}(t)$. Observe that $\mathcal{S}_j$ intersects $\overline{F}_0$ at the point of intersection with the strict transform of the curve $C_g\times P_i$ and intersects $\overline{F}_\infty$ at the point of intersection with the $(-2)$-curve in the resolution of $\delta_j$. Hence we may assume $t\ne0,\infty$. 

Now $\overline{F}_t$ is the image in $X_c$ of the curve $F_t$ in $C_g\times C_g$ given by the equation $x_1x_2=t$.  Recall that $Q\times P_i$ is given in local coordinates by $(v_1,x_2)=(0,0)$ and so near $\delta_j$ the curve $F_t$ is given by $u_1^{-1}x_2=t$.  We may rewrite this as $\gamma(v_1)^{-1}x_2=t$, for $\gamma(v_1)$ a continuous function of degree $2$ in $v_1$. Hence, close to $\delta_j$, the curve $F_t$ has coordinates given by $(v_1,t\gamma(v_1))$.  Thus the slope of $F_t$ at $\delta_j$ is 
$\lim_{v_1\to 0}\frac{t\gamma(v_1)}{v_1}=0.$

It follows that the strict transform $F_t'$ of $F_t$ in the blow-up of $C_g\times C_g$ at $\delta_j$ intersects the exceptional curve $E_0'$ with coordinates $[z_{0,1}':z_{0,2}']$ at the point $[z_{0,1}':z_{0,2}']=[1:0]$. Taking the coordinate patch $z_{0,1}'\ne0$ yields local coordinates $(v_1,z_{0,2}')$ and $F_t'$ intersects $E_0'$ at the point $(v_1,z_{0,2}')=(0,0)$. Moreover, near this point, the curve $F_t'$ has coordinates $(v_1,z_{0,2}')=\left(v_1, \frac{t\gamma(v_1)}{v_1}\right)$, since $v_1z_{0,2}'=z_{0,1}'x_2$. Hence the slope of $F_t'$ at the point $(v_1,z_{0,2}')=(0,0)$ is 
$\lim_{v_1\to 0}\frac{t\gamma(v_1)}{v_1^2}=t.$

Since the point $(v_1,z_{0,2}')=(0,0)$ gets blown up in the transformation $Y_0''\rightarrow Y_0$, the strict transform $F_t''$ of the curve $F_t'$ after this blowup intersects the exceptional curve at the point with coordinate $t$. Moreover, observe that this point with coordinate $t$ is covered by the coordinate patch $U_1$ introduced in Section \ref{typeIIsingpoints section}. Since the coordinate patch $U_c$ in $Y_c$ is obtained from $U_1$ by a sequence of $c-1$ quotients by $\mathbb{Z}/3\mathbb{Z}$, this point corresponds to the point with coordinate $t^{3^{c-1}}$ on the image of this exceptional curve in $Y_c$. But since this exceptional curve does not get contracted in passing from $Y_c$ to $X_c$ (see the proof of Proposition \ref{basepointfree}), this intersection point is also the point with coordinate $t^{3^{c-1}}$ on the image of this exceptional curve in $X_c$, which is just the rational curve $\mathcal{S}_j$

In other words, the curve $\overline{F}_t''$ intersects $\mathcal{S}_j$ at the point of $\mathcal{S}_j$ with coordinate $t^{3^{c-1}}$. Hence $s=t^{3^{c-1}}$ is the unique point in $\mathcal{S}_j$ such that $f(s)=t$ and therefore $\mathcal{S}_j$ indeed corresponds to a section of $f$.
\end{proof}


\section{The Surface $X_c$ is Extremal}\label{S6}
The Mordell-Weil group of an elliptic fibration $\varphi:S\rightarrow C$ is the group of $K$-rational points on the generic fiber of $\varphi$, where $K=\mathbb{C}(C)$. Such an elliptic surface $S$ is called \emph{extremal} if it has maximal Picard rank $\rho(S)$, meaning $\rho(S)=h^{1,1}(S)$, and its Mordell-Weil group has rank $r=0$. 

\begin{cor}\label{extremal} For $c\ge 2$, the surface $f:X_c\rightarrow \mathbb{P}^1$ is an extremal elliptic surface.\end{cor}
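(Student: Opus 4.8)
The plan is to deduce $r=0$ from the Shioda--Tate formula, using the complete list of singular fibers supplied by Theorem \ref{sing}. Since the definition of extremal requires both $\rho(X_c)=h^{1,1}(X_c)$ and Mordell--Weil rank $r=0$, and the maximality of the Picard rank of $X_c$ is already known (it is recorded in the introduction that $H^{1,1}(X_c)$ is spanned by algebraic classes), the only thing left to establish is that the Mordell--Weil rank vanishes.

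Recall the Shioda--Tate formula: for a relatively minimal elliptic surface $f\colon S\to C$ with a section,
$$\rho(S)=2+\sum_{v\in C}(m_v-1)+r,$$
where $m_v$ is the number of irreducible components of the fiber over $v$ and $r$ is the Mordell--Weil rank. Before applying it I would check its hypotheses: all fibers of $f$ are of Kodaira type $I_b$ or $I_b^*$, which are already relatively minimal (they contain no $(-1)$-curves), so $f\colon X_c\to\mathbb{P}^1$ is relatively minimal; and the fibration admits a section (equivalently, one may replace $f$ by its Jacobian fibration, which has the same Betti and Picard numbers and the same reducible fibers). This step---verifying the input hypotheses of Shioda--Tate---is the only point needing genuine care; the remainder is bookkeeping.

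Next I would compute the two sides numerically. From $q=0$ one has $b_1(X_c)=0$, so $b_2(X_c)=\chi_{\mathrm{top}}(X_c)-2=6\cdot 3^c+4$; combined with $h^{2,0}=h^{0,2}=p_g=g=\frac{3^c-1}{2}$ this gives
$$h^{1,1}(X_c)=b_2-2p_g=(6\cdot 3^c+4)-(3^c-1)=5\cdot 3^c+5.$$
On the other side, Theorem \ref{sing} lists the reducible fibers: an $I_{4\cdot 3^c}$ fiber with $4\cdot 3^c$ components, an $I_{3^c}^*$ fiber with $3^c+5$ components, and $3^c$ fibers of type $I_1$ (irreducible, contributing nothing). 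Hence
$$\sum_{v}(m_v-1)=(4\cdot 3^c-1)+(3^c+4)=5\cdot 3^c+3.$$

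Finally, plugging into Shioda--Tate and using $\rho(X_c)=h^{1,1}(X_c)=5\cdot 3^c+5$ yields
$$r=\rho(X_c)-2-\sum_v(m_v-1)=(5\cdot 3^c+5)-2-(5\cdot 3^c+3)=0,$$
so $X_c$ has Mordell--Weil rank $0$ and maximal Picard rank, hence is extremal. I would remark that, since $2+\sum_v(m_v-1)=h^{1,1}(X_c)$, the computation in fact gives a self-contained proof even without separately invoking maximality of $\rho$: the Lefschetz bound $\rho\le h^{1,1}$ together with Shioda--Tate forces $r\le 0$, whence $r=0$ and $\rho=h^{1,1}$ simultaneously.
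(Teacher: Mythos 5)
Your proposal is correct and follows essentially the same route as the paper: the Shioda--Tate formula, the reducible-fiber count $(4\cdot 3^c-1)+(3^c+4)=5\cdot 3^c+3$ from Theorem \ref{sing}, and comparison with $h^{1,1}(X_c)=5\cdot 3^c+5$. Indeed, the paper's proof is precisely your closing remark---it never invokes Schreieder's algebraicity result, but uses the Lefschetz bound $\rho(X_c)\le h^{1,1}(X_c)$ (computing $h^{1,1}$ via the elliptic-surface formula $10p_g-8q+10$ rather than through $b_2=\chi_{\mathrm{top}}-2$, an equivalent calculation) to force $r=0$ and $\rho(X_c)=h^{1,1}(X_c)$ simultaneously.
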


\begin{proof}
For the fibration $f:X_c\rightarrow \mathbb{P}^1$ and for any $v\in \mathbb{P}^1$, let $F_v$ denote the fiber $f^{-1}(v)$ and let $m_v$ denote the number of components of $F_v$. Define 
\[R=\{v\in \mathbb{P}^1\mid  F_v \text{ is reducible} \}.\]
The Shioda-Tate formula \cite[Corollary 6.13]{elsur} expresses the Picard number $\rho(X_c)$ in terms of the reducible singular fibers and the Mordell-Weil group of $f:X_c\rightarrow \mathbb{P}^1$:
\begin{equation}\label{stformula}\rho(X_c)=2+\sum_{v\in R}(m_v-1)+r.\end{equation}

We know from Theorem \ref{sing} that $f$ has two reducible singular fibers: one of type $I_{4\cdot3^c}$ at $0$ and one of type $I_{3^c}^*$ at $\infty$, having $4\cdot 3^c$ and $3^c+5$ components respectively. Therefore
\[\sum_{v\in R}(m_v-1)=(4\cdot 3^c-1)+(3^c+4)=5\cdot 3^c+3.\]
So then Equation (\ref{stformula}) becomes
$\rho(X_c)=5\cdot 3^c+5+r.$

However we know the Picard number $\rho(X_c)$ of $X_c$ satisfies $\rho(X_c)\le h^{1,1}(X_c)$. Moreover, we showed in the proof of Theorem \ref{sing} that $\chi_{\mathrm{top}}(X_c)=12(g+1)=6\cdot3^c+6$. Since $h^{1,0}(X_c)=h^{0,1}(X_c)=0$ and $h^{2,0}(X_c)=h^{0,2}(X_c)=g$, it follows that $h^{1,1}(X_c)=10(g+1)=5\cdot3^c+5$. Therefore $r=0$ and $\rho(X_c)=h^{1,1}(X_c)$.
\end{proof}


\section{The $j$-Invariant of $f\colon X_c\rightarrow \mathbb{P}^1$}\label{S8}
For an elliptic fibration $\varphi\colon S\rightarrow C$ without multiple fibers, consider the rational map $j\colon C\dashrightarrow \mathbb{P}^1$ given by sending each point $P\in C$ such that $\varphi^{-1}(P)$ is nonsingular to the $j$-invariant of the elliptic curve $\varphi^{-1}(P)$. This rational map $j$ can in fact be extended to all of $C$ (see for instance \cite{kodaira}). The morphism $j\colon C \rightarrow \mathbb{P}^1$ is called the $j$-\emph{invariant} of the elliptic surface $\varphi
\colon S\rightarrow C$. 

If $P\in C$ is such that $f^{-1}(P)$ is singular, then we have the following (reproduced from \cite{kloosterman}):
\begin{center}
\begin{tabular}{|c|c|}
\hline
Fiber Type over $P$& $j(P)$\\
\hline
$I_0^*$ & $\ne \infty$\\
$I_b$, $I_b*$ $(b>0)$ & $\infty$\\
$II$, $IV$, $IV^*$, $II^*$& $0$\\
$III$, $III^*$ & $1728$\\
\hline
\end{tabular}
\end{center}

\begin{lem}\label{j-inv}
For $c\ge 2$, the $j$-invariant $j\colon \mathbb{P}^1\rightarrow \mathbb{P}^1$ of $f\colon X_c\rightarrow \mathbb{P}^1$ is non-constant.
\end{lem}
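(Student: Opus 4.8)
The plan is to exploit the singular fiber data already obtained in Theorem \ref{sing} together with the dictionary between fiber types and the value of the $j$-invariant recorded in the table above. The guiding principle is that a constant $j$-invariant forces the elliptic surface to be isotrivial, and an isotrivial elliptic surface cannot carry singular fibers of multiplicative type $I_b$ with $b>0$; since $f:X_c\to\mathbb{P}^1$ manifestly does carry such fibers, $j$ must be non-constant. I would phrase the argument directly in terms of the table rather than invoking isotriviality as a black box, since the table makes the contradiction completely explicit.

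The key steps, in order, are as follows. First I would record that $j:\mathbb{P}^1\to\mathbb{P}^1$ is a genuine morphism and that, at any point $P$ where the fiber $f^{-1}(P)$ is smooth, the value $j(P)$ is by definition the classical $j$-invariant of that smooth elliptic curve, hence a \emph{finite} complex number. Because $f:X_c\to\mathbb{P}^1$ is an elliptic surface, its generic fiber is a smooth elliptic curve, so there is a dense open subset of $\mathbb{P}^1$ on which $j$ takes finite values. Second, I would invoke Theorem \ref{sing}: the fibration has singular fibers of type $I_{4\cdot 3^c}$ at $0$, of type $I_{3^c}^*$ at $\infty$, and of type $I_1$ at each primitive $3^c$-th root of unity $\zeta^i$. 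Consulting the table, every fiber of type $I_b$ or $I_b^*$ with $b>0$ forces $j(P)=\infty$; in particular $j(0)=\infty$ (and likewise $j(\zeta^i)=\infty$ and $j(\infty)=\infty$).

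The conclusion is then immediate: a morphism $\mathbb{P}^1\to\mathbb{P}^1$ that assumes finite values on a dense open set but takes the value $\infty$ at $0$ cannot be constant, so $j$ is non-constant. I do not expect any genuine obstacle here, since all the substantive input has already been established in Theorem \ref{sing}; the only point requiring a word of care is the observation that smooth fibers actually occur (guaranteed by $f$ being an elliptic surface with smooth generic fiber), so that $j$ really does take a finite value somewhere and hence cannot be identically $\infty$. If one prefers a self-contained contradiction, one may instead assume $j\equiv j_0$ is constant: the finiteness of $j_0$ on smooth fibers would force $j_0\neq\infty$, while continuity (morphism to $\mathbb{P}^1$) would force $j_0=j(0)=\infty$, a contradiction.
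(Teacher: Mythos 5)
Your proposal is correct and follows essentially the same argument as the paper: both use Theorem \ref{sing} together with the fiber-type table to conclude that $j(P)=\infty$ at the points with $I_b$ or $I_b^*$ fibers ($b>0$), while $j$ is finite on the dense open set of smooth fibers, forcing $j$ to be non-constant. Your write-up is slightly more explicit about the finiteness of $j$ on smooth fibers, but there is no substantive difference.
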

\begin{proof}
From Theorem \ref{sing}, all of the singular fibers of $f\colon X_c\rightarrow \mathbb{P}^1$ are of type $I_b$ or $I_b^*$ with $b>0$. Hence the $j$-invariant of $f\colon X_c\rightarrow \mathbb{P}^1$ satisfies $j(P)=\infty$ for all $P\in \mathbb{P}^1$ such that $f^{-1}(P)$ is singular. However, since generically for $P\in \mathbb{P}^1$ the $j$-invariant $j(P)$ is the $j$-invariant of the elliptic curve $f^{-1}(P)$, generically $j$ cannot be $\infty$. Thus $j$ is non-constant.
\end{proof}

\begin{prop}\label{j-inv2} For $c\ge 2$, the $j$-invariant $j:\mathbb{P}^1\rightarrow \mathbb{P}^1$ of $f:X_c\rightarrow \mathbb{P}^1$ has degree $6\cdot 3^c$ and is ramified at the points $0$, $1728$, and $\infty$. There are $2\cdot 3^c$ branch points above $0$, all of ramification index $3$. There are $3\cdot 3^{c}$ branch points above $1728$, all of ramification index $2$. Finally, there are $2$ branch points above $\infty$, one with ramification index $4\cdot 3^c$ corresponding to the point $0\in \mathbb{P}^1$ and one with ramification index $3^c$ corresponding to the point $\infty \in \mathbb{P}^1$.
\end{prop}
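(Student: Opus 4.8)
The plan is to combine the singular-fiber classification of Theorem \ref{sing} with the standard local behavior of the $j$-invariant near Kodaira fibers, and then to pin down the ramification over $0$ and $1728$ by a divisibility-plus-Riemann-Hurwitz argument, so that the exact indices and multiplicities fall out automatically.

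First I would record the standard fact from Kodaira's theory (as in \cite{miranda} or \cite{elsur}) that at a fiber of type $I_b$ or $I_b^*$ with $b>0$ the $j$-invariant has a pole of order exactly $b$; equivalently, viewed as a morphism $j:\mathbb{P}^1\to\mathbb{P}^1$, the map $j$ has ramification index $b$ over $\infty$ at that point. Feeding in the fibers from Theorem \ref{sing} -- one $I_{4\cdot 3^c}$ at $0$, one $I_{3^c}^*$ at $\infty$, and $3^c$ fibers of type $I_1$ at the roots of unity -- gives total pole order $4\cdot 3^c + 3^c + 3^c = 6\cdot 3^c$, so $\deg j = 6\cdot 3^c$. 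Over $\infty$ the ramification indices are $4\cdot 3^c$ (at $0$), $3^c$ (at $\infty$), and $3^c$ copies of $1$ (at the $I_1$ points); thus the only branch points over $\infty$ are $0$ and $\infty$, with the claimed indices, and the contribution to Riemann-Hurwitz is $r_\infty := \sum_{j(P)=\infty}(e_P-1) = (4\cdot 3^c-1)+(3^c-1) = 5\cdot 3^c - 2$.

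Next I would establish divisibility over $0$ and $1728$. Writing a Weierstrass model $y^2 = x^3 + Ax + B$ for the fibration, one has $j = 1728\cdot 4A^3/\Delta$ and $j-1728 = -1728\cdot 27 B^2/\Delta$, where $\Delta = 4A^3 + 27B^2$. Since by Theorem \ref{sing} every singular fiber lies over $j=\infty$, every point of $\mathbb{P}^1$ mapping to $0$ or $1728$ carries a smooth fiber and hence has $\Delta\ne 0$; consequently $\mathrm{ord}(j) = 3\,\mathrm{ord}(A)$ at any preimage of $0$ and $\mathrm{ord}(j-1728) = 2\,\mathrm{ord}(B)$ at any preimage of $1728$ (here $A,B$ are not identically zero, since otherwise $j$ would be the constant $0$ or $1728$, contradicting Lemma \ref{j-inv}). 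Therefore every ramification index over $0$ is a multiple of $3$ and every index over $1728$ is a multiple of $2$; in particular each of the $n_0$ preimages of $0$ has index $\ge 3$ and each of the $n_{1728}$ preimages of $1728$ has index $\ge 2$, which forces $n_0 \le 2\cdot 3^c$ and $n_{1728} \le 3\cdot 3^c$.

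Finally I would run Riemann-Hurwitz. Since $\deg j = 6\cdot 3^c$, we have $\sum_P(e_P-1) = 12\cdot 3^c - 2$, and subtracting $r_\infty = 5\cdot 3^c - 2$ leaves $r_0 + r_{1728} + r_{\mathrm{other}} = 7\cdot 3^c$, where $r_{\mathrm{other}}\ge 0$ collects ramification over points other than $0,1728,\infty$. The divisibility bounds give $r_0 = 6\cdot 3^c - n_0 \ge 4\cdot 3^c$ and $r_{1728} = 6\cdot 3^c - n_{1728}\ge 3\cdot 3^c$, so $r_0 + r_{1728}\ge 7\cdot 3^c$. Comparing forces equality throughout: $r_{\mathrm{other}} = 0$, $r_0 = 4\cdot 3^c$, and $r_{1728} = 3\cdot 3^c$, whence $n_0 = 2\cdot 3^c$ with every index exactly $3$ and $n_{1728} = 3\cdot 3^c$ with every index exactly $2$. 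I expect the main obstacle to be the third step -- carefully justifying the local divisibility of the ramification indices, i.e.\ choosing a Weierstrass model valid near the relevant fibers and ruling out the degenerate cases $A\equiv 0$ and $B\equiv 0$ -- since once that is secured the counting is automatic and, pleasantly, also yields for free that $j$ has no ramification away from $0$, $1728$, and $\infty$.
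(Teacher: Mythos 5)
Your proposal is correct, but it takes a genuinely different route from the paper. The paper deduces everything from Nori's general results on extremal elliptic surfaces: since $f:X_c\rightarrow \mathbb{P}^1$ is extremal (Corollary \ref{extremal}) with non-constant $j$, \cite[Theorem 3.1]{nori} gives that $j$ is unramified away from $\{0,1728,\infty\}$ together with the degree formula $\mathrm{deg}(j)=\sum_{I_b}b+\sum_{I_b^*}b$, and \cite[Lemma 3.2]{nori} supplies the bounds $R_0\ge \frac{2}{3}\mathrm{deg}(j)$, $R_{1728}\ge\frac{1}{2}\mathrm{deg}(j)$ and the identity $R_0+R_{1728}=\frac{7}{6}\mathrm{deg}(j)$, after which the paper forces equality exactly as you do. You instead bypass extremality and Nori entirely: you get the degree by summing pole orders over the singular fibers of Theorem \ref{sing}, and you manufacture the key inequalities yourself, via the divisibility $\mathrm{ord}(j)=3\,\mathrm{ord}(A)$ and $\mathrm{ord}(j-1728)=2\,\mathrm{ord}(B)$ at points carrying smooth fibers, before running the same Riemann--Hurwitz equality-forcing. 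The closing step of both proofs (pole of order $b$ at $I_b$ and $I_b^*$ fibers) is identical. Your version is more self-contained and makes clear that the conclusion needs only the fiber configuration plus a section, not extremality; the paper's is shorter given that Nori's machinery is already being imported for Theorem \ref{elmod}. Two small points you should tighten: the Weierstrass model $y^2=x^3+Ax+B$ exists because $f$ has a section (Proposition \ref{sections}, which precedes this proposition in the paper, so citing it is legitimate), and you should take the \emph{minimal} Weierstrass model so that the smoothness of the fiber of $X_c$ over a preimage of $0$ or $1728$ really does give $\Delta\ne 0$ there; both are standard and do not affect the correctness of the argument.
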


\begin{proof}
This follows directly from results of Mangala Nori in \cite{nori}.  In particular, Nori proves in \cite[Theorem 3.1]{nori} that an elliptic fibration $S\rightarrow B$ with non-constant $j$-invariant is extremal if and only if the fibration has no singular fibers of type $I_0^*$, $II$, $III$, or $IV$ and its $j$-invariant is ramified only over $0$, $1728$, and $\infty$ with ramification index $e_v$ for $v\in B$ satisfying $e_v=1$, $2$, or $3$ if $j(v)=0$ and $e_v=1$ or $2$ if $j(v)=1$.

We know from Corollary \ref{extremal} that $f\colon X_c\rightarrow \mathbb{P}^1$ is extremal and from Lemma \ref{j-inv} that it has non-constant $j$-invariant. Hence, it follows from  \cite[Theorem 3.1]{nori} that $j:\mathbb{P}^1\rightarrow \mathbb{P}^1$ is ramified only over the points $0$, $1728$, and $\infty$ and that
\[\mathrm{deg}(j)=\sum_{I_b} b + \sum_{I_b^*}b,\]
where the two sums occur over all the singular fibers of $f$ of type $I_b$ and of type $I_b^*$ respectively. 

From Theorem \ref{sing}, the fibration $f\colon X_c\rightarrow \mathbb{P}^1$ has one fiber of type $I_{4\cdot 3^c}$, a total of $3^c$ fibers of type $I_1$, and one fiber of type $I_{3^c}^*$. Thus $\mathrm{deg}(j)=6\cdot 3^c.$

Now let
\begin{equation*}
\begin{aligned}
\mathcal{R}_0&=\{v\in \mathbb{P}^1\mid j(v)=0\}\\
\mathcal{R}_{1728}&=\{v\in \mathbb{P}^1\mid j(v)=1728\}.
\end{aligned}
\end{equation*}

If $e_v$ denotes the ramification index of a point $v\in \mathbb{P}^1$, let
\begin{equation*}
\begin{aligned}
R_0&=\sum_{v\in \mathcal{R}_0}(e_v-1)\\
R_{1728}&=\sum_{v\in \mathcal{R}_{1728}}(e_v-1)
\end{aligned}
\end{equation*}

Then since Theorem \ref{sing} implies that $f$ has no singular fibers of type $II,$ $II^*,$ $III$, $III*$, $IV$, or $IV^*,$ Nori's calculations in the proof of
\cite[Lemma 3.2]{nori} yield the following three equations:
\begin{equation}R_0+R_{1728}=\frac{7\cdot\mathrm{deg}(j)}{6}\end{equation}
\begin{equation}\label{r0}R_0-\frac{2\cdot\mathrm{deg}(j)}{3}\ge0 \end{equation}
\begin{equation}\label{r1}R_{1728}-\frac{\mathrm{deg}(j)}{2}\ge0 \end{equation}
Observe that
$\frac{2\cdot\mathrm{deg}(j)}{3}+ \frac{\mathrm{deg}(j)}{2}=\frac{7\cdot\mathrm{deg}(j)}{6}.$
Therefore we must have equality in Equations (\ref{r0}) and (\ref{r1}). It follows that
$$R_0=\frac{2\cdot\mathrm{deg}(j)}{3}=4\cdot 3^c$$
$$R_{1728}=\frac{\mathrm{deg}(j)}{2}=3\cdot3^{c}.$$
Moreover, because equality holds in  (\ref{r0}), Nori's proof in \cite[Lemma 3.2]{nori} implies that 
$\mathrm{deg}(j)=3|\mathcal{R}_0|.$
Hence we have
\begin{equation}\label{calr0}|\mathcal{R}_0|=2\cdot 3^c.\end{equation}
Now from \cite[Theorem 3.1]{nori}, for any $v\in \mathcal{R}_0$, we must have $e_v\le 3$. Hence using (\ref{calr0}), it follows that 
$R_0\le 4\cdot 3^c.$
But we have already shown that in fact equality holds, therefore we have $e_v=3$ for all $v\in \mathcal{R}_0$.

Since $X_c$ is extremal and $f:X_c\rightarrow \mathbb{P}^1$ has no singular fibers of type $III^*$, Nori's results \cite[Theorem 3.1]{nori} also imply that $e_v=2$ for all $v\in \mathcal{R}_{1728}$.

Finally, we know $j$ has a pole of order $b_i$ at points $v_i\in \mathbb{P}^1$ where the fiber over $v_i$ is of type $I_{b_i}$ or of type $I_{b_i}^*$. Hence the result follows from Theorem \ref{sing}.
\end{proof}


\section{The Surface $X_c$ is Elliptic Modular}\label{S9}
We begin by giving a brief introduction to elliptic modular surfaces as defined by Shioda \cite{modsur}. 
\subsection{Preliminaries on Elliptic Modular Surfaces} 
Following Nori \cite{nori}, for an elliptic surface $\varphi\colon S\rightarrow C$ with $j$-invariant $j\colon C\rightarrow \mathbb{P}^1$, let us define
\[C'=C\backslash j^{-1}\{0,1728,\infty\}.\]
In particular, for every $v\in C'$, the fiber $F_v=\varphi^{-1}(v)$ is smooth. The sheaf $G=R^1\varphi_*\mathbb{Z}$ on $C$ is the \emph{homological invariant} of the elliptic surface $S$. The restriction of $G$ to $C'$ is then a locally constant sheaf of rank two $\mathbb{Z}$-modules. Consider the monodromy homomorphism $\rho\colon \pi_1(C')\rightarrow SL(2,\mathbb{Z})$ associated to $\varphi\colon S\rightarrow C$. Observe that $\rho$ both determines and is determined by the sheaf $G$.

Conversely, let $j\colon C\rightarrow \mathbb{P}^1$ be a holomorphic map from an algebraic curve $C$ to $\mathbb{P}^1$ and let $C'=C\backslash j^{-1}\{0,1728,\infty\}.$ Let $\mathcal{H}=\{z\in \mathbb{C} \mid \mathrm{Im} (z)>0\}$ be the upper half-plane in $\mathbb{C}$ and consider the elliptic modular function $J\colon \mathcal{H}\rightarrow \mathbb{P}^1\backslash\{0,1728,\infty\}$. Finally let $U'$ be the universal cover of $C'$. Then there exists a holomorphic map $w\colon U'\rightarrow \mathcal{H}$ such that the following diagram commutes:
\begin{equation}\label{mono}
\begin{tikzcd}
U'\arrow{d}{\pi} \arrow{r}{w}&\mathcal{H}\arrow{d}{J}\\
C'\arrow{r}{j} &\mathbb{P}^1\backslash\{0,1728,\infty\}.
\end{tikzcd}
\end{equation}
This map $w$ thus induces a homomorphism $\overline{\rho}\colon \pi_1(C')\rightarrow PSL(2,\mathbb{Z})$.

Now suppose $\rho\colon \pi_1(C')\rightarrow SL(2,\mathbb{Z})$ is a homomorphism making the following diagram commute:
\[
\begin{tikzcd}[column sep=0pt]
\pi_1(C')\arrow{rd}{\overline{\rho}} \arrow{rr}{\rho}&[-6]&SL(2,\mathbb{Z})\arrow{ld}\\
&PSL(2,\mathbb{Z})&.
\end{tikzcd}
\]
Then it is possible to construct a unique elliptic surface $\varphi\colon S\rightarrow C$ having $j$-invariant given by the holomorphic map $j\colon C\rightarrow \mathbb{P}^1$ and having homological invariant given by the sheaf $G$ associated to the homomorphism $\rho$ \cite[Section 8]{kodaira}.

So now consider any finite-index subgroup $\Gamma$ of the modular group $SL(2,\mathbb{Z})$ not containing $-\mathrm{Id}$. Then $\Gamma$ acts on the upper half plane $\mathcal{H}$ and the quotient $\Gamma \backslash \mathcal{H}$, together with a finite number of cusps, forms an algebraic curve $C_\Gamma$. For any other such subgroup $\Gamma'$, if $\Gamma\subset \Gamma'$, then the canonical map $\Gamma \backslash \mathcal{H}\rightarrow \Gamma' \backslash \mathcal{H}$ extends to a holomorphic map $C_\Gamma \rightarrow C_{\Gamma'}$. In particular, taking $\Gamma'=SL(2,\mathbb{Z})$ and identifying $C_{\Gamma'}$ with $\mathbb{P}^1$ via the elliptic modular function $J$, we get a holomorphic map
\[j_{\Gamma}\colon C_{\Gamma}\rightarrow \mathbb{P}^1.\]
Hence, as discussed, there exists a $w\colon U'\rightarrow \mathcal{H}$ fitting into a diagram (\ref{mono}) which induces a representation $\overline{\rho}\colon \pi_1(C')\rightarrow \overline{\Gamma}\subset PSL(2,\mathbb{Z})$, where $\overline{\Gamma}$ is the image of $\Gamma$ in $PSL(2,\mathbb{Z})$. Because $\Gamma$ contains no element of order $2$, this homomorphism $\overline{\rho}$ lifts to a homomorphism $\rho\colon \pi_1(C')\rightarrow SL(2,\mathbb{Z})$, which then gives rise to a sheaf $G_\Gamma$ on $C_\Gamma$.

\begin{defn}\cite{modsur} For any finite index subgroup $\Gamma$ of $SL(2,\mathbb{Z})$ not containing $-\mathrm{Id}$, the associated elliptic surface $\varphi\colon S_\Gamma\rightarrow C_\Gamma$ having $j$-invariant $j_\Gamma$ and homological invariant $G_\Gamma$ is called the \emph{elliptic modular surface} attached to $\Gamma$.  
\end{defn}

\subsection{The case of $X_c$}

We now return to considering the elliptic surface $f\colon X_c\rightarrow \mathbb{P}^1$. Let us define the following elements $A_0, A_1 \ldots, A_{3^c}, A_\infty$ of $SL(2,\mathbb{Z})$ as elements of the following conjugacy classes:
\[
A_0\in \left[\left(
\begin{array}{cc}
1&4\cdot 3^c\\
0& 1
\end{array}\right)\right]
\ \ \ \ \ \ \ 
A_1,\ldots, A_{3^c}\in \left[\left(
\begin{array}{cc}
1&1\\
0& 1
\end{array}\right)\right]
\ \ \ \ \ \ \ 
A_\infty\in \left[\left(
\begin{array}{cc}
-1&-3^c\\
0& -1
\end{array}
\right)
\right]
\]

Then consider the subgroup $\Gamma_c$ of index $6\cdot 3^c$ in $SL(2,\mathbb{Z})$ with the following presentation:
\[\Gamma_c:=\langle A_0, A_1,\ldots, A_{3^c}, A_{\infty} \mid A_0A_1\cdots A_{3^c}A_\infty=\mathrm{Id}\rangle.\]
We remark that $\Gamma_c$ is not a congruence subgroup as it does not appear on the list in \cite{experimental} of the genus $0$ congruence subgroups of $SL(2,\mathbb{Z})$ (see \cite{rademacher} for more details on such subgroups).

\begin{thm}\label{elmod} For $c\ge 2$, the surface $X_c$ is the elliptic modular surface attached to $\Gamma_c$. \end{thm}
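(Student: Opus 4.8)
The plan is to show that the elliptic modular surface attached to $\Gamma_c$ has exactly the same $j$-invariant and homological invariant as $f:X_c\rightarrow\mathbb{P}^1$, and then invoke the uniqueness of the elliptic surface associated to a given pair of invariants (as recalled above following Kodaira). By the definition of the elliptic modular surface $S_{\Gamma_c}\rightarrow C_{\Gamma_c}$, it suffices to produce an isomorphism $C_{\Gamma_c}\cong\mathbb{P}^1$ under which the map $j_{\Gamma_c}$ agrees with the $j$-invariant of $f:X_c\rightarrow\mathbb{P}^1$ computed in Proposition \ref{j-inv2}, and under which the homological invariant $G_{\Gamma_c}$ (equivalently the monodromy representation $\rho:\pi_1(C')\rightarrow SL(2,\mathbb{Z})$) matches the monodromy of $f$.

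First I would analyze the modular curve $C_{\Gamma_c}=\overline{\Gamma_c\backslash\mathcal{H}}$. The presentation of $\Gamma_c$ records the local monodromy around each cusp: the generators $A_0,A_1,\ldots,A_{3^c},A_\infty$ are precisely the loops around the cusps, and their conjugacy classes encode the fiber types. Using the standard correspondence between a parabolic element $\left(\begin{smallmatrix}1&b\\0&1\end{smallmatrix}\right)$ and a fiber of type $I_b$, and $\left(\begin{smallmatrix}-1&-b\\0&-1\end{smallmatrix}\right)$ and a fiber of type $I_b^*$, the chosen conjugacy classes exactly reproduce the singular fibers of Theorem \ref{sing}: one $I_{4\cdot3^c}$, the $3^c$ copies of $I_1$, and one $I_{3^c}^*$. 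The relation $A_0A_1\cdots A_{3^c}A_\infty=\mathrm{Id}$ is the product-of-loops relation on a sphere with $3^c+2$ punctures, forcing the base $C_{\Gamma_c}$ to have genus $0$; combined with the cusp count this identifies $C_{\Gamma_c}\cong\mathbb{P}^1$. I would then verify that the covering $C_{\Gamma_c}\rightarrow\mathbb{P}^1=C_{SL(2,\mathbb{Z})}$ has degree $[SL(2,\mathbb{Z}):\Gamma_c]/2=6\cdot3^c$ (the factor accounting for $-\mathrm{Id}\notin\Gamma_c$) with the ramification profile over $0,1728,\infty$ dictated by the index and the cusp widths. This must match the degree and ramification data of $j$ from Proposition \ref{j-inv2}, where the ramification of index $3$ over $0$, index $2$ over $1728$, and the cusp widths $4\cdot3^c$ and $3^c$ over $\infty$ appear.

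The heart of the matter, and the step I expect to be the main obstacle, is showing that the two monodromy representations genuinely coincide (not merely that they have matching local conjugacy classes at each cusp). Local conjugacy data constrains each $A_i$ up to conjugacy, but the global representation $\pi_1(C')\rightarrow SL(2,\mathbb{Z})$ is determined by how these local pieces fit together, i.e. by the actual conjugating elements, which must be compatible with the single relation. The cleanest route is to define $\Gamma_c$ as the image of the monodromy of $f:X_c\rightarrow\mathbb{P}^1$ itself: one computes the monodromy representation of the elliptic fibration directly from the singular fiber data and its geometric realization, shows its image is a finite-index subgroup of $SL(2,\mathbb{Z})$ with precisely the given generators and relation, and deduces the index is $\deg(j)=6\cdot3^c$ from the general theory. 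Since $X_c$ is extremal with non-constant $j$-invariant, the surface is recovered from $(j,G)$; matching $j_{\Gamma_c}=j$ and $G_{\Gamma_c}=G$ then yields $X_c\cong S_{\Gamma_c}$.

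A subtlety to address carefully is the lift from $PSL(2,\mathbb{Z})$ to $SL(2,\mathbb{Z})$: the homological invariant is genuinely an $SL(2,\mathbb{Z})$-object, and the presence of the $I_{3^c}^*$ fiber (reflected in the $-\mathrm{Id}$ appearing in the conjugacy class of $A_\infty$) means the sign issues must be tracked, in particular confirming $-\mathrm{Id}\notin\Gamma_c$ so that the attached elliptic modular surface is well-defined and the index-versus-degree bookkeeping is correct. Once the monodromy image is shown to be $\Gamma_c$ with the asserted presentation and the index is confirmed to be $6\cdot3^c$, the identification of $X_c$ with the elliptic modular surface attached to $\Gamma_c$ follows from the uniqueness statement recalled in the preliminaries, completing the proof.
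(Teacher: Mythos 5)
Your overall strategy---match the functional invariant $j$ and the homological invariant $G$ of $X_c$ with those of $S_{\Gamma_c}$ and invoke Kodaira's uniqueness---is a legitimate reformulation of what must be proved, and you correctly identify the crux: the global monodromy representation must be matched, not merely the local conjugacy classes at the cusps. But your proposed resolution of that crux does not close the gap, and the missing ingredient is exactly the theorem the paper uses as its opening move: Nori's result that an extremal elliptic surface with a section, non-constant $j$-invariant, and no fibers of type $II^*$ or $III^*$ is an elliptic modular surface. Redefining $\Gamma_c$ as the monodromy image of $f$ and then ``deducing the index is $\deg(j)$ from the general theory'' is precisely the unproven point. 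What general theory gives (via Kodaira's map $w:U'\to\mathcal{H}$) is only a factorization of $j$ through the modular curve of the monodromy image $\bar\Gamma$, hence that the index of $\bar\Gamma$ \emph{divides} $\deg(j)$; equality---equivalently, that the factor map has degree one, i.e.\ that the $j$-map of $X_c$ realizes $\mathbb{P}^1$ as the modular curve of its monodromy group and that the monodromy representation is the canonical one---is equivalent to the elliptic modular property itself, so assuming it is circular. It genuinely fails without extremality: a generic rational elliptic surface with twelve $I_1$ fibers has monodromy image of finite index (indeed all of $SL(2,\mathbb{Z})$) with local monodromies in the conjugacy class of $\left(\begin{smallmatrix}1&1\\0&1\end{smallmatrix}\right)$, yet $\deg(j)=12$ exceeds the index and the surface is not elliptic modular; more generally, any base change of an elliptic modular surface preserves the monodromy image while multiplying $\deg(j)$. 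For the same reason, your claim that the image has ``precisely the given generators and relation'' (no further relations) requires identifying the kernel of $\rho$ with the normal closure of the loops over $0$ and $1728$, which is again supplied by the elliptic modular property, not by local fiber data. The paper avoids all of this by first citing Nori's criterion---extremality (Corollary \ref{extremal}), a section (Proposition \ref{sections}), non-constant $j$ (Lemma \ref{j-inv}), and the fiber types (Theorem \ref{sing})---to conclude that $X_c$ \emph{is} elliptic modular attached to its monodromy group $\Gamma$, and only then computes the index and presentation of $\Gamma$. In your write-up extremality appears only in the final uniqueness step, where it is not needed (Kodaira's uniqueness requires only a section and no multiple fibers); it is needed earlier, and nothing in your argument replaces it.

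A smaller bookkeeping issue: your formula $[SL(2,\mathbb{Z}):\Gamma_c]/2=6\cdot 3^c$ is inconsistent with the definition of $\Gamma_c$ as a subgroup of index $6\cdot 3^c$ in $SL(2,\mathbb{Z})$ and with the identification $\deg(j)=[SL(2,\mathbb{Z}):\Gamma]$ used in the paper's proof. Whatever convention one adopts relating the index of $\Gamma$ (when $-\mathrm{Id}\notin\Gamma$) to the degree of $C_{\Gamma}\to\mathbb{P}^1$, it must be applied consistently; as written, your count and the statement of the theorem differ by a factor of $2$.
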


\begin{proof}
In \cite[Theorem 3.5]{nori}, Mangala Nori proves that  extremal elliptic surface $\varphi\colon S\rightarrow C$ with a section and with non-constant $j$-invariant is an elliptic modular surface as long as $\varphi\colon S\rightarrow C$ has no singular fibers of type $II^*$ or $III^*$ in Kodaira's classification. Therefore, since the surface $f\colon X_c\rightarrow \mathbb{P}^1$ is extremal  (by Corollary \ref{extremal}), has a section (by Theorem \ref{sing}), has non-constant $j$-invariant (by Lemma \ref{j-inv}), and only has fibers of type $I_b$ and $I_b^*$ (by Theorem \ref{sing}), we know $X_c$  is indeed an elliptic modular surface. 

So let $\Gamma$ be the finite-index subgroup of $SL(2,\mathbb{Z})$ attached to $X_c$. By Proposition \ref{j-inv2}, the degree of the $j$-invariant of $f:X_c\rightarrow \mathbb{P}^1$ is $6\cdot 3^c$. Hence the group $\Gamma$ has index $6\cdot 3^c$ in $SL(2,\mathbb{Z})$. 

Now consider the $j$-invariant $j:\mathbb{P}^1\rightarrow \mathbb{P}^1$ of $X_c$, which we have investigated in Proposition \ref{j-inv2}. Let $C'=\mathbb{P}^1\backslash \{0,1728,\infty\}$.

Because $X_c$ is elliptic modular, its $j$-invariant induces a homomorphism 
\[\rho\colon \pi_1(C')\rightarrow \Gamma\subset SL(2,\mathbb{Z}).\]
Let us write the set
\[j^{-1}\{0,1728,\infty\}=\{v_1,\ldots,v_s\}.\]
By Proposition  \ref{j-inv2}, we know $s=5\cdot 3^c +2$. For each point $v_i$ let $\alpha_i$ be the loop element in $\pi_1(C')$ going around $v_i$. 
Then $\pi_1(C')$ is the free group on these generators $\alpha_1,\ldots, \alpha_s$ subject to the relation (taken in cyclic order) $\alpha_1\cdots\alpha_s=1$ \cite[Lemma 2.1]{bogomolov}.

In \cite[Proposition 1.4]{nori}, Nori describes, for an elliptic surface $S\rightarrow C$ with loop elements $\alpha_i\in \pi_1(C)$ around $v_i\in C$, the possible values of $\rho(\alpha_i)$ depending on the values of $j(v_i)$. 
By Proposition \ref{j-inv2}, for our elliptic surface $f\colon X_c\rightarrow \mathbb{P}^1$ all of the points $v_i$ such that $j(v_i)=0$ have ramification index $3$. Hence by \cite[Proposition 1.4]{nori}, for the corresponding $\alpha_i$, we have $\rho(\alpha_i)= \pm \mathrm{Id}$. However since $X_c$ is elliptic modular, the subgroup $\Gamma$ cannot contain $-\mathrm{Id}$. Hence, for all $i$ such that $j(v_i)=0$, we must have $\rho(\alpha_i) =\mathrm{Id}$. 

Similarly, by Proposition \ref{j-inv2} all of the points $v_i$ such that $j(v_i)=1728$ have ramification index $2$. But then by \cite[Proposition 1.4]{nori}, for all such $i$, we have $\rho(\alpha_i)=\pm \mathrm{Id}$ and thus, in fact, $\rho(\alpha_i)=  \mathrm{Id}$.

Therefore the only points $v_i\in j^{-1}\{0,1728,\infty\}$ that contribute non-identity elements to $\Gamma$ are the points sent to $\infty$ by $j$. These are exactly the points of $\mathbb{P}^1$ underneath the singular fibers of $f\colon X_c\rightarrow \mathbb{P}^1$. From \cite[Proposition 4.2]{modsur}, if a point $v_i$ has singular fiber of type $I_b$ with $b>0$, then
\[\rho(\alpha_i) \in \left[ \left( \begin{array}{cc} 1&b\\ 0 & 1\end{array}\right)\right].\]
If a point $v_i$ has singular fiber of type $I_b^*$ with $b>0$, then 
\[\rho(\alpha_i) \in \left[ \left( \begin{array}{cc} -1&-b\\ 0 & -1\end{array}\right)\right].\]
Therefore, using Theorem \ref{sing}, in the case of $f\colon X_c\rightarrow \mathbb{P}^1$, the point $0$ contributes a generator $A_0$ of $\Gamma$ in the conjugacy class of 
\[\left( \begin{array}{cc} 1&4\cdot 3^c\\ 0 & 1\end{array}\right)\]
in $SL(2,\mathbb{Z})$. Each point $\zeta^i$, for $\zeta$ a $3^c$-th root of unity, contributes a generator $A_{i+1}$ in the conjugacy class of 
\[\left( \begin{array}{cc} 1&1\\ 0 & 1\end{array}\right).\]
Finally, the point $\infty$ contributes a generator $A_\infty$ in the conjugacy class of
\[\left( \begin{array}{cc} -1&-3^c\\ 0 & -1\end{array}\right).\]
 
Then $\Gamma$ is the free group on these generators $A_0, A_1, \ldots, A_{3^c}, A_\infty$ subject to the relation $$A_0 A_1 \cdots A_{3^c}A_\infty=\mathrm{Id}.$$

Hence we indeed have that $\Gamma$ is the group $\Gamma_c$ defined above. 
\end{proof}

\textbf{Acknowledgements.}  The author would like to thank Matt Kerr, Jaclyn Lang, Christopher Lyons, Stefan Schreieder, and Burt Totaro for helpful discussions in the preparation of this article. The author gratefully acknowledges support of the National Science Foundation through awards DGE-1144087 and DMS-1645877.


\normalsize{\bibliography{EllipticModular.bib}}

\begin{thebibliography}{CLY04}

\bibitem[AS14]{arapura}
D.~Arapura and P.~Solapurkar.
\newblock A new class of surfaces with maximal {P}icard number.
\newblock \url{http://arxiv.org/abs/1406.2143}, 2014.

\bibitem[Bea14]{beauville}
A.~Beauville.
\newblock Some surfaces with maximal {P}icard number.
\newblock {\em J. {\'E}c. Polytech. Math.}, 1:101--116, 2014.

\bibitem[BT03]{bogomolov}
F.~Bogomolov and Y.~Tschinkel.
\newblock Monodromy of elliptic surfaces.
\newblock In {\em {G}alois groups and fundamental groups}, volume~41 of {\em
  Math. Sci. Res. Inst. Publ.}, pages 167--181. {C}ambridge {U}niv. {P}ress,
  {C}ambridge, 2003.

\bibitem[BV04]{BV}
A.~Beauville and C.~Voisin.
\newblock On the {C}how ring of a {K}3 surface.
\newblock {\em J. Alg. Geom.}, 13:417--426, 2004.

\bibitem[CD89]{cossec}
F.~Cossec and I.~Dolgachev.
\newblock {\em {E}nriques surfaces. {I}}, volume~76 of {\em Progress in
  Mathematics}.
\newblock Birkh{\"a}user Boston, Inc., Boston, MA, 1989.

\bibitem[CH07]{cynk}
S.~Cynk and K.~Hulek.
\newblock Higher-dimensional modular {C}alabi-{Y}au manifolds.
\newblock {\em Canad. Math. Bull.}, 50(4):486--503, 2007.

\bibitem[CLY04]{rademacher}
K.~Chua, M.~Lang, and Y.~Yang.
\newblock On {R}ademacher's conjecture: congruence subgroups of genus zero of
  the modular group.
\newblock {\em J. Algebra}, 277(1):408--428, 2004.

\bibitem[CP03]{experimental}
C.~J. Cummins and S.~Pauli.
\newblock Congruence subgroups of {${\rm PSL}(2,{\Bbb Z})$} of genus less than
  or equal to 24.
\newblock {\em Experiment. Math.}, 12(2):243--255, 2003.

\bibitem[FL17]{FL}
Laure Flapan and Jaclyn Lang.
\newblock Chow motives associated to certain algebraic {H}ecke characters.
\newblock \url{https://arxiv.org/abs/1708.03145}, 2017.

\bibitem[Klo04]{kloosterman}
R.~Kloosterman.
\newblock Extremal elliptic surfaces and infinitesimal {T}orelli.
\newblock {\em Michigan Math. J.}, 52(1):141--161, 2004.

\bibitem[Kod60]{kodaira2}
K.~Kodaira.
\newblock On compact analytic surfaces.
\newblock In {\em {A}nalytic functions}, pages 121--135. {P}rinceton Univ.
  Press, Princeton, N.J., 1960.

\bibitem[Kod63]{kodaira}
K.~Kodaira.
\newblock On compact analytic surfaces. {II}, {III}.
\newblock {\em Ann. of Math. (2) 77 (1963), 563--626; ibid.}, 78:1--40, 1963.

\bibitem[Kol07]{kollar}
J.~Koll{\'a}r.
\newblock {\em Resolution of singularities}.
\newblock Princeton University Press, 2007.

\bibitem[KT15]{kock}
B.~K{\"o}ck and J.~Tait.
\newblock Faithfulness of actions on {R}iemann-{R}och spaces.
\newblock {\em Canad. J. Math.}, 67(4):848--869, 2015.

\bibitem[Laz04]{positivity}
R.~Lazarsfeld.
\newblock {\em Positivity in algebraic geometry. {I}}.
\newblock Springer-Verlag, Berlin, 2004.

\bibitem[LV17]{LV}
R.~Laterveer and C.~Vial.
\newblock On the {C}how ring of {C}ynk-{H}ulek {C}alabi-{Y}au varieties and
  {S}chreieder varieties.
\newblock \url{https://arxiv.org/abs/1712.03070}, 2017.

\bibitem[Mir89]{miranda}
R.~Miranda.
\newblock {\em The basic theory of elliptic surfaces}.
\newblock Dottorato di Ricerca in Matematica. [Doctorate in Mathematical
  Research]. ETS Editrice, Pisa, 1989.

\bibitem[Nor85]{nori}
M.~Nori.
\newblock On certain elliptic surfaces with maximal {P}icard number.
\newblock {\em Topology}, 24(2):175--186, 1985.

\bibitem[Rei12]{reid}
M.~Reid.
\newblock Surface cyclic quotient singularities and {H}irzebruch-{J}ung
  resolutions.
\newblock 2012.
\newblock \url{http://www. warwick. ac. uk/masda/surf}.

\bibitem[Sch15]{schreieder}
S.~Schreieder.
\newblock On the construction problem for {H}odge numbers.
\newblock {\em Geom. Topol.}, 19(1):295--342, 2015.

\bibitem[Shi72]{modsur}
T.~Shioda.
\newblock On elliptic modular surfaces.
\newblock {\em J. Math. Soc. Japan}, 24:20--59, 1972.

\bibitem[SI77]{singularK3}
T.~Shioda and H.~Inose.
\newblock On singular {$K3$} surfaces.
\newblock In {\em Complex analysis and algebraic geometry}, pages 119--136.
  Iwanami Shoten, Tokyo, 1977.

\bibitem[SS10]{elsur}
M.~Sch{{\"u}}tt and T.~Shioda.
\newblock Elliptic surfaces.
\newblock In {\em Algebraic Geometry in {E}ast {A}sia---{S}eoul 2008},
  volume~60 of {\em Adv. Stud. Pure Math.}, pages 51--160. Math. Soc. Japan,
  Tokyo, 2010.

\end{thebibliography}
\bibliographystyle{alpha}

\end{document}